\newcommand{\PP}{\mathbb{P}}
\newcommand{\RR}{\mathbb{R}}
\newcommand{\ZZ}{\mathbb{Z}}
\DeclareMathOperator{\trop}{trop}
\DeclareMathOperator{\Trop}{Trop}
\DeclareMathOperator{\tr}{tr}
\DeclareMathOperator{\Del}{Del}
\DeclareMathOperator{\Stab}{Stab}
\DeclareMathOperator{\cogr}{cogr}
\newcommand{\tropcurve}{weighted metric graph }
\newcommand{\tropcurves}{weighted metric graphs }
\begin{document}

\title*{From Curves to Tropical Jacobians and Back}
\author{Barbara Bolognese, Madeline Brandt, Lynn Chua}
\institute{Barbara Bolognese \at The University of Sheffield, Sheffield, UK \email{ b.bolognese@sheffield.ac.uk}
\and Madeline Brandt \at Department of Mathematics, University of California, Berkeley, 970 Evans Hall, Berkeley, CA, 94720, \email{brandtm@berkeley.edu}
\and Lynn Chua \at Department of Electrical Engineering and Computer
Science, University of California, Berkeley, 643 Soda Hall, Berkeley,
CA, 94720, \email{chualynn@berkeley.edu}}
%
%
\maketitle

\abstract*{Given a curve defined over an algebraically closed field which is complete with respect to a nontrivial valuation, we study its tropical Jacobian. This is done by first tropicalizing the curve, and then computing the Jacobian of the resulting weighted metric graph. In general, it is not known how to find the abstract tropicalization of a curve defined by polynomial equations, since an embedded tropicalization may not be faithful, and there is no known effective algorithm for carrying out semistable reduction in practice. We solve this problem in the case of hyperelliptic curves by studying admissible covers. We also describe how to take a weighted metric graph and compute its period matrix, which gives its tropical Jacobian and tropical theta divisor. Lastly, we describe the present status of reversing this process, namely how to compute a curve which has a given matrix as its period matrix.}

\abstract{
Given a curve defined over an algebraically closed field which is complete with respect to a nontrivial valuation, we study its tropical Jacobian. This is done by first tropicalizing the curve, and then computing the Jacobian of the resulting weighted metric graph. In general, it is not known how to find the abstract tropicalization of a curve defined by polynomial equations, since an embedded tropicalization may not be faithful, and there is no known algorithm for carrying out semistable reduction in practice. We solve this problem in the case of hyperelliptic curves by studying admissible covers. We also describe how to take a weighted metric graph and compute its period matrix, which gives its tropical Jacobian and tropical theta divisor. Lastly, we describe the present status of reversing this process, namely how to compute a curve which has a given matrix as its period matrix.}

\section{Introduction}
We describe the process of taking a curve and finding its tropical Jacobian. We aim to carry out each step as algorithmically as possible, however, some steps cannot yet be completed in such a way. We now give a brief overview of the steps involved, which we depict in Figure \ref{structure}.

 Let $\Bbbk$ be an algebraically closed field which is complete with respect to a non-archimedean valuation $v$, and let $X$ be a nonsingular curve of genus $g$ over $\Bbbk$. Let $R$ be the valuation ring of $\Bbbk$ with maximal ideal $m$, and let $k = R/m$ be its residue field. We can associate to $X$ its \emph{abstract tropicalization}, which is the dual \tropcurve $\Gamma$ of the special fiber of a semistable model of $X$. 
In practice, finding the abstract tropicalization of a general curve
is difficult and there is no known algorithm to do this in general
\cite[Remark~3]{CJ}. In this paper, we solve this problem for hyperelliptic curves, which is new to the literature, and discuss known results towards finding abstract tropicalizations of all curves.

 Given $\Gamma$, we compute its \emph{period matrix} $Q_\Gamma$. This corresponds to the \emph{tropical Jacobian} of the curve $X$. By taking the Voronoi decomposition dual to the Delaunay subdivision corresponding to $Q_\Gamma$, we obtain the \emph{tropical theta divisor}. 
This process can also be inverted. The set of period matrices that arise as the tropical Jacobian of a curve is the \emph{tropical Schottky locus}. Starting with a principally polarized tropical abelian variety whose period matrix $Q$ is known to lie in the tropical Schottky locus, we give a procedure to compute a curve whose tropical Jacobian corresponds to $Q$.

This process of associating a tropical Jacobian to a curve can also be carried out by looking at classical Jacobians of curves. Jacobians of curves are principally polarized abelian varieties in a natural way; they are the most well known and extensively studied among abelian varieties. Both algebraic curves and abelian varieties have extremely rich geometries, which cannot be fully understood in many cases. Jacobians  provide a link between such geometries, and they often reveal hidden features of algebraic curves which cannot be uncovered otherwise. In order to associate a tropical Jacobian to a complex algebraic curve $C$, one first constructs its classical Jacobian
\begin{equation}
J(C) :  = H^0(C, \omega _C)^*/H_1(C, \ZZ ),
\end{equation}
where we denote by $\omega _C$ the cotangent bundle of the curve. This complex torus admits a natural principal polarization $\Theta$, called the {\it theta divisor}, such that the pair $(J(C), \Theta)$ is a principally polarized abelian variety. We can then obtain the tropical Jacobian by taking the Berkovich skeleton of the classical Jacobian. Baker-Rabinoff, and independently Viviani, proved that this alternative path gives the same result as the procedure we describe in this paper \cite{jacobians, Viviani}.  However, the classical process hides more difficulties on the computational level and proves much more challenging to carry out in explicit examples. Methods have been implemented in the \emph{Maple} package \emph{algcurves} for computing Jacobians numerically over $\mathbb{C}$ \cite{algcurves}.
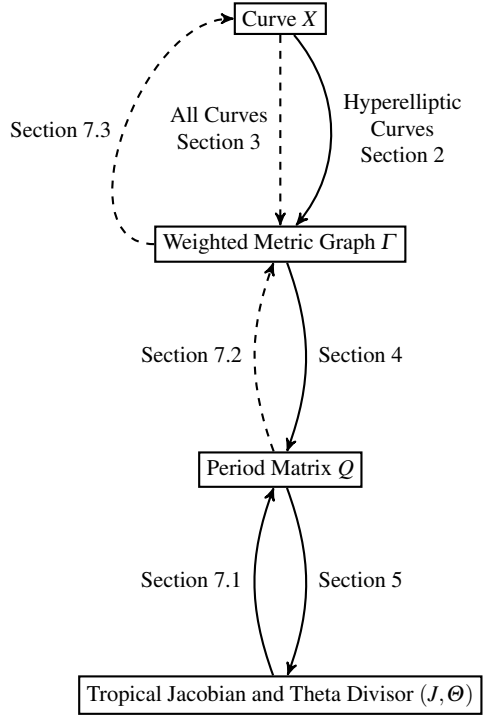
\begin{figure}
\begin{center}
\begin{tikzpicture}[->,>=stealth',auto,node distance=3cm,
  thick,main node/.style={draw}]

  \node[main node] (1) {Curve $X$};
  \node[main node] (2) [below of=1] {Weighted Metric Graph $\Gamma$};
  \node[main node] (3) [below of=2] {Period Matrix $Q$};
  \node[main node] (4) [below of=3] {Tropical Jacobian and Theta Divisor $(J,\Theta)$};

\draw [->](1)  to [out=310, in=50]  node [midway,right]
 {\begin{tabular}{c} Hyperelliptic\\ Curves\\ Section 2 \end{tabular}} 
 (2) ;
\draw [dashed,->](1)  to [out=270, in=90]  node [midway,left] 
 {\begin{tabular}{c} All Curves\\ Section 3 \end{tabular}}  
(2) ;
\draw [->](2)  to [out=290, in=70]  node [midway,right]
 {\begin{tabular}{c} Section 4 \end{tabular}}  
 (3);
\draw [->](3)  to [out=290, in=70]  node [midway,right]
 {\begin{tabular}{c} Section 5 \end{tabular}}  
 (4);
\draw [->](4)  to [out=110, in=250]  node [midway,left]
 {\begin{tabular}{c}Section 7.1 \end{tabular}}  
 (3);
 \draw [dashed,->](3)  to [out=110, in=250]  node [midway,left]
 {\begin{tabular}{c}Section 7.2 \end{tabular}}  
 (2);
  \draw [dashed,->](2)  to [out=180, in=180]  node [midway,left]
 {\begin{tabular}{c}Section 7.3 \end{tabular}}  
 (1);

\end{tikzpicture}
\caption{The structure of the paper, and the process of associating a tropical Jacobian to a curve. Dashed arrows indicate steps which are not yet achievable algorithmically.}
\label{structure}
\end{center}
\end{figure}

The structure of this paper is depicted in Figure \ref{structure}. In Section \ref{hyperelliptic} we find the abstract tropicalization of all hyperelliptic curves, a result which is new to this paper. Then, in Section \ref{generalcurves} we discuss issues with embedded tropicalization, state some known results about certifying a faithful tropicalization, and outline the process of semistable reduction. This step of the procedure is far from being algorithmic, and this section focuses on examples which provide obstacles to doing this in general. In Section \ref{periodmatrix} we describe how to find the period matrix of a weighted metric graph. Then we define and give examples of the tropical Jacobian and its theta divisor in Section \ref{jacobian}. In Section \ref{section-schottky}, we discuss the tropical Schottky problem. Finally, we describe the obstacles to reversing this process in Section \ref{andback}.

\section{ Hyperelliptic Curves}
\label{hyperelliptic}

We now study the problem of finding the abstract tropicalization of hyperelliptic curves.
 In the case of elliptic curves, the tropicalization can be completely described in terms of the $j$-invariant \cite{KMM}. 
 Similarly, tropicalizations of genus 2 curves (all of which are hyperelliptic) can be described by studying \emph{tropical Igusa invariants} \cite{helminck}.
 This problem was also solved in genus 2 by studying the curve as a double cover of $\mathbb{P}^1$ ramified at 6 points, as done in \cite[Section 5]{RSS}.

In this section, we generalize the latter method to find tropicalizations of all hyperelliptic curves, which was previously not known. 
Let $X$ be a nonsingular hyperelliptic curve of genus $g$ over $\Bbbk$, an algebraically closed field which is complete with respect to a nontrivial, non-archimedean valuation $v$. Our goal is to find $\Gamma$, the abstract tropicalization of $X$. 

We denote by $\mathcal{M}_{g,n}$ the moduli space of genus $g$ curves with $n$ marked points, see \cite{harris} for a thorough introduction. The space $\mathcal{M}_{0,2g+2}$ maps surjectively onto the hyperelliptic locus inside $\mathcal{M}_g$ by identifying each hyperelliptic curve of genus g with a double cover of $\mathbb{P}^1$ ramified at $2g+2$ marked points. When the characteristic of $\Bbbk$ is not 2, the normal form for the equation of a hyperelliptic curve is
$
y^2 = f(x),
$
where $f(x)$ has degree $2g+2$, and the roots of $f$ are distinct. Then, these are precisely the ramification points.

The space $\mathcal{M}_{0,2g+2}^{tr}$ is the tropicalization of $\mathcal{M}_{0,2g+2}$. A \emph{phylogenetic tree} is a metric tree with leaves labeled $\{1, \ldots, m\}$ and no vertices of degree 2. Such a tree is uniquely specified by the distances $d_{ij}$ between the leaves.
We see that $\mathcal{M}_{0,2g+2}^{tr}$ parametrizes the space of phylogenetic trees with $2g+2$ leaves using the Pl\"ucker embedding to map $\mathcal{M}_{0,2g+2}$ into the Grassmannian $Gr(2,\Bbbk^{2g+2})$ \cite[Chapter 4.3]{tropicalbook} :
\begin{equation}
\{(a_i: b_i)\}_{i=1}^{2g+2} \mapsto (p_{12}: p_{13}: \cdots: p_{2g+1, 2g+2}) \ \ \text{where}\ \ p_{i,j} = a_ib_j - a_jb_i.
\end{equation}
The distances are then given by $d_{ij} = -2p_{ij} + n\cdot \bf{1}$ for a suitable constant $n$ \cite[Section 5]{RSS}.
  Using the Neighbor Joining Algorithm \cite[Algorithm 2.41]{PS05}, one can construct the unique tree, along with the lengths of its interior edges, using only the leaf distances $d_{ij}$ as input. 
  Since the lengths of leaf edges can only be defined up to adding a constant length to each leaf, we think of this tree as a metric graph where the leaves have infinite length and the interior edges have lengths as described.
 
This realizes $\mathcal{M}_{0,2g+2}^{tr}$ as a $2g-1$ dimensional fan inside $\mathbb{TP}^{{2g+2 \choose 2}-1}$ (cf. \cite[Section 2.5]{tropicalbook}). The space $\mathcal{M}_{0,2g+2}^{tr}$ can be computed as a tropical subvariety of $\mathbb{TP}^{{2g+2 \choose 2}-1}$, since it has a tropical basis given by the Pl\"{u}cker relations for $Gr(2,\Bbbk^{2g+2})$ \cite[Chapter 4.4]{tropicalbook}. Each cone corresponds to a combinatorial type of tree (see Figure \ref{genus3poset}), and the dimension of each cone corresponds to the number of interior edges in the tree.
 
The next step in finding the tropicalization of the hyperelliptic curve $X$ is to take the corresponding point in $\mathcal{M}_{0,2g+2}^{tr}$, as a tree on $2g+2$ leaves, and compute a \tropcurve in $\mathcal{M}_{g}^{tr}$. Figure \ref{genus3poset} gives this correspondence in the case $g=3$. We now give some definitions related to metric graphs in order to describe this correspondence for general $g$, following \cite{melody}.
 
  \begin{figure}
  \centering
  \includegraphics[width=1.0\linewidth, height = 5 in]{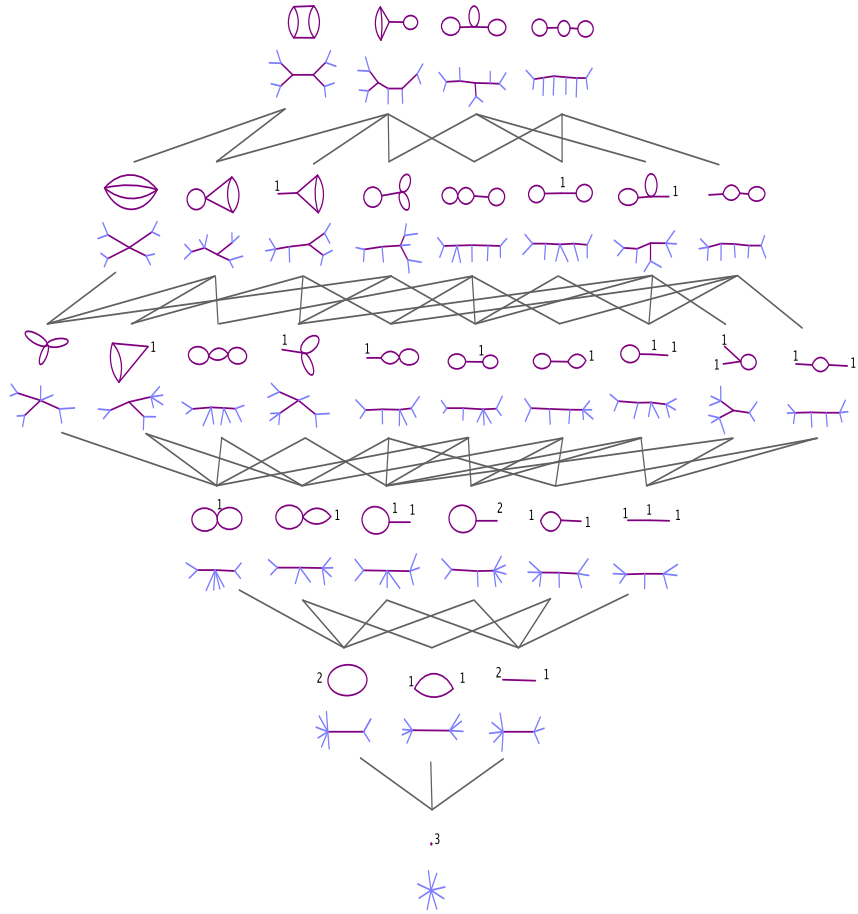}
  \caption{The poset of unlabeled trees with 8 leaves, and tropicalizations of hyperelliptic curves of genus 3. Both are ordered by the relation of contracting an edge.}
  \label{genus3poset}
  \end{figure}

\begin{definition}
A \emph{metric graph} is a metric space $\Gamma$, together with a graph $G$ and a length function $l: E(G) \rightarrow \mathbb{R}_{>0}\cup\{\infty\}$ such that $\Gamma$ is obtained by gluing intervals $e$ of length $l(e)$, or by gluing rays to their endpoints, according to how they are connected in $G$. In this case, the pair $(G,l)$ is called a \emph{model} for $\Gamma$.
A \emph{\tropcurve} is a metric graph $\Gamma$ together with a weight function on its points $w:\Gamma \rightarrow \mathbb{Z}_{\geq 0}$, such that $\sum_{v \in \Gamma}w(v)$ is finite. 
\end{definition}

 We call edges of infinite length \emph{infinite leaves}, and these only meet the rest of the graph in one endpoint. A \emph{bridge} is an edge whose deletion increases the number of connected components. 

 The \emph{genus} of a \tropcurve ($\Gamma$, $w$) is 
\begin{equation}
\label{genusgamma}
 \sum_{v \in \Gamma} w(v) + |E(G)|-|V(G)|+1,
 \end{equation}
 where $G$ is any model of $\Gamma$. 
 We say that two \tropcurves of genus $\geq 2$ are isomorphic if one can be obtained from the other via graph automorphisms, or by removing infinite leaves or leaf vertices $v$ with $w(v) = 0$, together with the edge connected to it. In this way, every \tropcurve has a \emph{minimal skeleton}. 

A model is \emph{loopless} if there is no vertex with a loop edge. The \emph{canonical loopless model} of $\Gamma$, with genus of $\Gamma \geq 2$ , is the graph $G$ with vertices
\begin{align}
V(G) := \{x \in \Gamma\ |\ \text{val}(x)\not = 2\ \text{or}\ w(x)>0\ \text{or}\ x\ \text{is the midpoint of a loop}\}\,.
\end{align}
If $(G,l)$ and $(G',l')$ are loopless models for metric graphs $\Gamma$ and $\Gamma'$, then a \emph{morphism of loopless models} $\phi: (G,l) \rightarrow (G',l')$ is a map of sets ${V(G) \cup E(G) \rightarrow V(G') \cup E(G')}$ such that
\begin{itemize}
\item All vertices of $G$ map to vertices of $G'$.
\item If $e\in E(G)$ maps to $v\in V(G')$, then the endpoints of $e$ must also map to $v$.
\item If $e\in E(G)$ maps to $e'\in E(G')$, then the endpoints of $e$ must map to vertices of $e'$.
\item Infinite leaves in $G$ map to infinite leaves in $G'$.
\item If $\phi(e) = e'$, then $l'(e')/l(e)$ is an integer. These integers must be specified if the edges are infinite leaves.
\end{itemize}
  We call an edge $e \in E(G)$ \emph{vertical} if $\phi$ maps $e$ to a vertex of $G'$. We say that $\phi$ is \emph{harmonic} if for every $v \in V(G)$, the \emph{local degree}
\begin{equation}
d_v = \sum_{\substack{
e\ni v,\\ \phi(e) = e'
}} l'(e') / l(e)
\end{equation}
is the same for all choices of $e' \in E(G')$. If it is positive, then $\phi$ is \emph{nondegenerate}. The \emph{degree} of a harmonic morphism is defined as 
\begin{equation}
\sum_{\substack{e \in E(G),\\ \phi(e) = e'}} l'(e')/l(e).
\end{equation}
We also say that $\phi$ satisfies the \emph{local Riemann-Hurwitz condition} if:
\begin{equation}
2 - 2w(v) = d_v(2 - 2w'(\phi(v))) - \sum_{e\ni v}\left(l'(\phi(e))/l(e) - 1\right).
\end{equation}
If $\phi$ satisfies this condition at every vertex $v$ in the canonical loopless model of $\Gamma$, then $\phi$ is called an \emph{admissible cover} \cite{admissiblecover}.

\begin{definition}\cite[Theorem 1.3]{melody}
Let $\Gamma$ be a weighted metric graph, and let $(G, l)$ denote its canonical loopless model. We say that $\Gamma$ is \emph{hyperelliptic} if there exists a nondegenerate harmonic morphism of degree 2 from $G$ to a tree.
\end{definition}
 A hyperelliptic curve will always tropicalize to a hyperelliptic weighted metric graph, however not every hyperelliptic \tropcurve is the tropicalization of a hyperelliptic curve.

\begin{theorem}\cite[Corollary 4.15]{liftingharmonic} Let $\Gamma$ be a minimal \tropcurve of genus $g \geq 2$. Then there is a smooth proper hyperelliptic curve $X$ over $\Bbbk$ of genus $g$ having $\Gamma$ as its minimal skeleton if and only if $\Gamma$ is hyperelliptic and for every $p \in \Gamma$ the number of bridge edges adjacent to $p$ is at most $2 w(p)+2$.
\end{theorem}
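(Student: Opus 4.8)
The plan is to obtain both implications from the dictionary between finite morphisms of curves and harmonic morphisms of metrized complexes developed in \cite{liftingharmonic}: a hyperelliptic curve of genus $g\ge 2$ is precisely a tame degree-$2$ cover $X\to\PP^1$, and on the level of skeleta such a cover is an admissible cover $\Gamma\to T$ onto a tree, enriched by a compatible system of degree-$2$ covers of the residue curves $\PP^1$. The ``only if'' direction is a descent from $X$ to its skeleton; the ``if'' direction, where one must build the cover, is where the work lies.

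\emph{Necessity.} Let $X$ be smooth, proper and hyperelliptic of genus $g$ with minimal skeleton $\Gamma$. The hyperelliptic involution $\iota$ extends to a semistable model, hence acts on the Berkovich skeleton; since $X/\iota\cong\PP^1$ has genus $0$, its skeleton is a tree $T$, and the induced map of canonical loopless models $\phi\colon G\to T$ is a nondegenerate harmonic morphism of degree $2$, so $\Gamma$ is hyperelliptic. Because $X\to\PP^1$ is finite, $\phi$ underlies a finite harmonic morphism of metrized complexes and therefore satisfies local Riemann--Hurwitz at every vertex $p$ of $G$; with the target residue curve $\PP^1$ of genus $0$ this says that the residue cover at $p$ is branched over $2d_p-2+2w(p)$ points. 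A bridge of $\Gamma$ is a separating edge, so in a degree-$2$ cover of a tree it is either vertical or the unique preimage of an edge of $T$, hence totally ramified; in either case it accounts for one of these branch points, and distinct bridges at $p$ give distinct branch points. Since $d_p\le 2$, this forces the number of bridges at $p$ to be at most $2w(p)+2$.

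\emph{Sufficiency.} Conversely, suppose $\Gamma$ is hyperelliptic with the bridge bound, and fix its canonical loopless model $(G,l)$, the weight function $w$, and a nondegenerate degree-$2$ harmonic morphism $\phi\colon G\to T$ onto a tree. I would upgrade $\phi$ to a tame harmonic morphism of metrized complexes: at each vertex $p$ choose a double cover $C_p\to\PP^1$ with $C_p$ smooth of genus $w(p)$, together with marked points over the tangent directions at $p$ compatible with the dilation factors, glued along the edges of $G$. By Riemann--Hurwitz such a cover is branched over exactly $2w(p)+2$ points; the bridge-directions at $p$ are exactly the ramified directions, each of the at most $2w(p)+2$ bridges uses one branch point, and the remaining branch points are placed at arbitrary distinct smooth points of $\PP^1$ --- globally these leftover branch points are the $2g+2$ Weierstrass points of the curve to be built, a count that follows from \eqref{genusgamma}. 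Once a tame covering datum is in hand, the lifting theorem of \cite{liftingharmonic} produces a finite degree-$2$ morphism $X\to\PP^1$ of smooth proper $\Bbbk$-curves whose skeleton realizes $\phi$; then $X$ is hyperelliptic and has genus equal to that of its skeleton $\Gamma$, namely $g$. (Alternatively, for a fixed phylogenetic tree $T$ one can make this construction explicit, recovering $\Gamma$ from $T$ by the parity-of-branch-points recipe of \cite[Section 5]{RSS}.)

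\emph{Main obstacle.} The crux is the global consistency of the local choices: one must select the residue covers $C_p\to\PP^1$ and the positions of their branch points simultaneously at all vertices so that the dilation factors and marked points agree along every edge, and so that over the ``split'' vertices the two sheets are matched coherently. The inequality $\#\{\text{bridges at }p\}\le 2w(p)+2$ is exactly the numerical condition ensuring that the forced branch points never exceed the Riemann--Hurwitz budget at any vertex, so that such a coherent global choice exists; checking that it always can be made, and that this is the \emph{only} obstruction, is the content of \cite[Corollary 4.15]{liftingharmonic}. The one place where the argument genuinely changes is $\mathrm{char}\,\Bbbk=2$, where a degree-$2$ cover is wildly ramified and tameness fails; there one restricts to residue characteristic $\ne 2$ or appeals to the separate analysis of wild covers.
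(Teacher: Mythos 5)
You should know at the outset that the paper contains no proof of this statement: it is quoted verbatim, with attribution, from \cite[Corollary 4.15]{liftingharmonic}, so there is no in-paper argument to compare yours against; what you have written is a sketch of the Amini--Baker--Brugall\'e--Rabinoff argument, and as a proof it has two genuine gaps. In the necessity direction, the load-bearing step is your claim that a bridge, ``being a separating edge, is either vertical or the unique preimage of an edge of $T$, hence totally ramified.'' That inference is false for nondegenerate degree-$2$ harmonic morphisms to trees in general: let $\Gamma$ be a circle with two pendant edges $e$, $f$ attached at two points $x,y$ that are exchanged by a reflection of the circle, each pendant ending at a vertex of weight $1$ (a minimal graph of genus $3$); extending the reflection by swapping $e$ and $f$ gives a nondegenerate degree-$2$ harmonic morphism onto a tree in which $e$ and $f$ are separating, are not vertical, and are dilation-$1$ partners over the same edge of the tree. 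What your count actually requires is that the involution induced on $\Gamma$ by the hyperelliptic involution of $X$ fixes every bridge pointwise; this is true for a minimal graph of genus $\geq 2$, but it needs an argument (for instance: both sides of a bridge have positive genus by minimality, and an involution that swapped a bridge with another edge, or flipped it, would force a cycle or a positive-genus component to inject into the special fiber of a model of $X/\iota\cong\PP^1$, which has only genus-$0$ components and a tree as dual graph). Without that, a bridge could a priori be unramified, consume no branch point of $C_p\to\PP^1$, and the bound $\leq 2w(p)+2$ would not follow.

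The sufficiency direction has the more serious gap. You fix an arbitrary witnessing morphism $\phi$ and assert that its ramified (dilation-$2$) directions at $p$ are exactly the bridge directions; this is unjustified in both directions -- the example above has unramified bridges, and nothing in your argument prevents a chosen witness from having more dilation-$2$ directions at some vertex than the Riemann--Hurwitz budget $2w(p)+2$ allows even when the bridge bound holds. The substance of the corollary is precisely the construction of a \emph{suitable} degree-$2$ harmonic morphism (after possibly changing the witness and enriching the target tree with legs for the leftover branch points), together with compatible residue covers $C_p\to\PP^1$, so that the lifting theorem for tame coverings of metrized complexes applies; your ``Main obstacle'' paragraph names this global consistency problem but then defers it to \cite[Corollary 4.15]{liftingharmonic} itself, which is circular, since that construction is exactly what a proof must supply. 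Finally, note that the tameness you invoke forces residue characteristic $\neq 2$; this hypothesis is present in the cited corollary but absent from the statement as reproduced here, and it should be stated rather than waved at via ``the separate analysis of wild covers.''
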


Lemma \ref{treelemma} and its proof give an algorithm for taking a tree with $2g+2$ infinite leaves and obtaining a metric graph which is an admissible cover of the tree.

\begin{lemma}
\label{treelemma}
 Every tree $T$ with $2g+2$ infinite leaves has an admissible cover $\phi$ by a unique hyperelliptic metric graph $\Gamma$ of genus $g$, and $\phi$ is harmonic of degree 2.
\end{lemma}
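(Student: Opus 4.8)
The plan is to write $\Gamma$ down explicitly, edge by edge of $T$, generalising the genus $2$ recipe of \cite[Section~5]{RSS}. Call an edge $e$ of $T$ \emph{odd} if the two components of $T\setminus e$ each contain an odd number of the $2g+2$ leaves, and \emph{even} otherwise; since $2g+2$ is even the two components always have the same parity, so this is well defined and every leaf edge is odd. The combinatorial fact behind everything is that at each finite vertex $v$ of $T$ the number $r_v$ of odd edges at $v$ is even: if $b_e$ denotes the number of leaves in the component of $T\setminus v$ meeting $e$, then $\sum_{e\ni v}b_e = 2g+2$, an edge at $v$ is odd exactly when $b_e$ is odd, and a sum equal to an even number has an even number of odd summands. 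Now build $\phi\colon\Gamma\to T$ as follows: over an even edge $e$ put two edges of the same length as $e$, each mapped isomorphically; over an odd edge $e$ put one edge of half the length mapped onto $e$ with dilation $2$ (over an odd infinite leaf, one infinite leaf with dilation $2$); over a vertex $v$ with $r_v=0$ put two vertices of weight $0$; over a vertex $v$ with $r_v\ge2$ put one vertex of weight $r_v/2-1\ge0$. The edges then attach in the only compatible way -- both copies of an even edge meeting a one-point fibre, split between the two points of a two-point fibre, with a coherent choice of sheets over each component of the union of even edges, which is possible because that union is a forest -- and the result is connected because $T$ is. Because $T$ has no valence $2$ vertices, every vertex produced has valence $\ne2$ or positive weight, and $\Gamma$ is loopless, so this model is the canonical loopless model of $\Gamma$.

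It then remains a finite check, by cases on the type of a vertex fibre, that $\phi$ is a morphism of loopless models, that the local degree at each vertex equals $2$ independently of the chosen target edge -- so that $\phi$ is harmonic, nondegenerate and of degree $2$ -- and that the weights prescribed above are precisely those forced by the local Riemann--Hurwitz condition, so that $\phi$ is an admissible cover; since $T$ is a tree, $\Gamma$ is then hyperelliptic. Finally, counting $\sum_v w(v)$, $|E(\Gamma)|$ and $|V(\Gamma)|$ in terms of the numbers of odd and even edges and of one- and two-point fibres of $T$, and using that $T$ has one more finite vertex than it has internal edges, formula \eqref{genusgamma} yields $\mathrm{genus}(\Gamma)=g$.

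For uniqueness, let $\phi'\colon\Gamma'\to T$ be any nondegenerate admissible cover of degree $2$ with $\Gamma'$ a hyperelliptic metric graph of genus $g$. Over the interior of an edge $e$ the fibre is a disjoint union of segments with dilations summing to $\deg\phi'=2$, hence is one segment of dilation $2$ or two of dilation $1$; write $\epsilon(e)\in\{1,0\}$ accordingly. Harmonicity and $\deg\phi'=2$ force the fibre over a vertex $v$ to be a single point as soon as some edge at $v$ has $\epsilon=1$ and two points otherwise, and the local Riemann--Hurwitz condition then forces the weights; in particular the number of edges at $v$ with $\epsilon=1$ is even and a one-point fibre carries weight half that number minus one. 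Thus $\Gamma'$ has exactly the local structure of the construction for the data $\epsilon$, and rerunning the count above gives the clean identity
\[
\mathrm{genus}(\Gamma') \;=\; \tfrac12 L_1 - 1, \qquad L_1 := \#\{\text{leaves }\ell\text{ of }T:\epsilon(\ell)=1\}.
\]
Hence $\mathrm{genus}(\Gamma')=g$ forces $\epsilon(\ell)=1$ on all $2g+2$ leaves. The requirement that $\sum_{e\ni v}\epsilon(e)$ be even at every vertex, started from $\epsilon\equiv1$ on the leaves, then propagates uniquely inward through the tree and recovers $\epsilon(e)=1$ exactly when $e$ is odd; and with $\epsilon$ fixed, the cover over each component of the union of even edges (two copies of a subtree glued along prescribed vertices) is uniquely determined. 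Hence $\Gamma'\cong\Gamma$ over $T$.

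The bookkeeping behind ``$\phi$ is harmonic and satisfies local Riemann--Hurwitz'' is routine, if tedious case by case. The real content is the genus identity in the uniqueness step: it is exactly what forces all $2g+2$ leaves of $T$ to be ramification points, killing the a priori large ambiguity in the choice of $\epsilon$ and pinning $\Gamma$ down completely.
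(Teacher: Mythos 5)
Your proof is correct, but it takes a genuinely different route from the paper. The paper's argument is an iterative one: it orders the finite vertices of $T$ by distance from a root, and at each vertex uses the local Riemann--Hurwitz condition to decide whether the outgoing edge splits into two edges or becomes a single half-length bridge, with uniqueness coming from the fact that each step is forced; the key input that the fibre over every infinite leaf is a single leaf of dilation $2$ is simply asserted (it is implicit in the admissible-cover moduli problem, where the ramification profile $(2)$ over each marked point is prescribed), and the statement that the resulting graph has genus $g$ is asserted as ``a consequence of the local Riemann--Hurwitz condition'' without computation. You instead give a closed-form, global description: the parity rule (an edge is ramified exactly when it separates the $2g+2$ leaves into two odd-size sets), an explicit construction of $\Gamma$ from that rule, and a uniqueness argument in which the identity $\operatorname{genus}(\Gamma')=\tfrac12 L_1-1$ forces all leaves to be ramified, after which triviality of the cycle space of a tree pins down the ramification locus and hence the cover. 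This buys two things the paper leaves implicit: the genus count is actually carried out, and ``all $2g+2$ leaves are ramification points'' is derived from the genus hypothesis rather than imposed, so your statement of uniqueness is genuinely among all degree-$2$ nondegenerate admissible covers with genus-$g$ source. What the paper's version buys is an algorithmic, example-ready recursion (matching their worked example). One small point you share with the paper and should flag in a sentence: both arguments tacitly assume $\phi'$ has no edges of $\Gamma'$ contracted to vertices of $T$; this is harmless under nondegeneracy, since a contracted edge at a two-point fibre forces negative weight by local Riemann--Hurwitz, and a contracted edge at a one-point fibre is a loop whose midpoint in the canonical loopless model would have local degree $0$, contradicting nondegeneracy. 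Likewise your appeal to ``$T$ has no valence-$2$ vertices'' (used to identify your model as the canonical loopless one) is justified because the trees in question come from $\mathcal{M}^{\mathrm{trop}}_{0,2g+2}$, i.e.\ are phylogenetic.
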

\begin{proof} 
Let $T$ be a tree with $2 g + 2$ infinite leaves. If all infinite leaves are deleted, then a finite tree $T'$ remains. Let $v_1, \ldots, v_k$ be the vertices of $T'$, ordered such that the distance from $v_i$ to $v_k$ is greater than or equal to the distance from $v_j$ to $v_k$, for $i<j$.

We construct $\Gamma$ iteratively by building the preimage of each vertex $v_i$, asserting along the way that the local Riemann-Hurwitz condition holds. This also gives an algorithm for finding $\Gamma$. We begin with $v_1$, which has a positive number $n_1$ of leaf edges in $T$. 
Since $\phi$ has degree 2, it must be locally of degree 1 or 2 at every vertex of $\Gamma$. Since the preimage of each infinite leaf must be an infinite leaf, we will attach $n_1$ infinite leaves at the preimage $\phi^{-1}(v_1)$ in $\Gamma$.

At any vertex in $\Gamma$ with infinite leaves, $\phi$ has local degree $2$, hence we will attach to $\Gamma$ an infinite leaf $e$ such that $l(\phi(e))/l(e) = 2$. Then, there is a unique vertex in the preimage $\phi^{-1}(v_1)$. Otherwise, there would need to be another edge in the preimage of each leaf, so the degree of the morphism would be greater than 2.

Let $e_1$ be the edge connecting $v_1$ to some other $v_i$. There are two possibilities:
\begin{enumerate}
\item The preimage of $e_1$ is two edges in $\Gamma$, each with length $l(e_1)$. The local Riemann-Hurwitz equation reads
\begin{equation}
2-2w(\phi^{-1}(v_1)) = 2(2-0)-(n_1 + 0+ 0).
\end{equation}
This is only possible if $n_1$ is even, and $\phi^{-1}(v_1)$ has weight $(n_1-2)/2$.
\item The preimage of $e_1$ is one edge in $\Gamma$, with length $l(e_1)/2$. The local Riemann-Hurwitz equation reads
\begin{equation}
2-2w(\phi^{-1}(v_1)) = 2(2-0)-(n_1 + 1).
\end{equation}
This is only possible if $n_1$ is odd, and $\phi^{-1}(v_1)$ has weight $(n_1-1)/2$.
\end{enumerate}
Now, we proceed to the other vertices. As long as the order of the vertices is respected, at each vertex $v_i$ there will be at most one edge $e_i$ whose preimage in $\Gamma$ we do not know. Then, what happens at $v_i$ can be completely determined by studying the local Riemann-Hurwitz data. For $i>1$, let $n_i$ be the number of infinite leaves at $v_i$ plus the number of edges $e \in T$ such that $e = \{v_i, v_j\}$, $j<i$, and $\phi^{-1}(e)$ is a bridge in $\Gamma$. If $n_i>0$, then either 1 or 2 holds. However, it is possible that $n_i = 0$, in which case we have a third possibility:
\begin{enumerate}
\setcounter{enumi}{2}
\item If $n_i = 0$, let $v'_i \in \phi^{-1}(v_i)$. The local Riemann-Hurwitz equation reads:
\begin{equation}
2-2w(v_i') = d_{v_i}(2-0)-(0).
\end{equation}
Then we must have $d_{v_i} = 1$ and $w(v_i')=0$, which implies that there are two vertices in $\phi^{-1}(v_i)$.
\end{enumerate}
Finally, we glue the pieces of $\Gamma$ as specified by $T$, and contract the leaf edges on $\Gamma$.
The fact that $\Gamma$ has genus $g$ is a consequence of the local Riemann-Hurwitz condition. 
\qed \end{proof}

We remark that this process did not require the fact that the tree had an odd number of leaves. Indeed, if one repeats this procedure for such a tree, a hyperelliptic metric graph will be obtained. However, this graph is not the tropicalization of a hyperelliptic curve.

\begin{example}
\label{algoex}
In Figure \ref{fig-hyperelliptic-ex}, we have a tree with vertices labelled $v_1,\ldots, v_7$. Beginning with $v_1$, we observe that $n_1 = 2$, which means that the edge from $v_1$ to $v_3$ has two edges in its preimage. The same is true for $v_2$. Moving on to $v_3$, we see that $n_3=0$, which means that $v_3$ has two points in $\Gamma$ which map to it. We can connect the edges from $\phi^{-1}(v_1)$ and $\phi^{-1}(v_2)$ to the two points in $\phi^{-1}(v_3)$. Since $\phi^{-1}(v_3)$ has two points, the edge from $v_3$ to $v_4$ corresponds to two edges in $\Gamma$, so $n_4 = 2$, which means that the edge from $v_4$ to $v_5$ also splits. Next, $n_5 = 1$, which means that the edge from $v_5$ to $v_6$ corresponds to a bridge in $\Gamma$. Then, $n_6 = 4$, which means that the edge $v_6$ to $v_7$ splits, and the vertex mapping to $v_6$ has genus $1$. Lastly, since $n_7=2$, the point mapping to $v_7$ has genus 0. All edges depicted in the image have the same length as the corresponding edges in the tree, except for the bridge, which has length equal to half the length of the corresponding edge in the tree.

\begin{figure} 
\begin{center}
\includegraphics[height=2.3in]{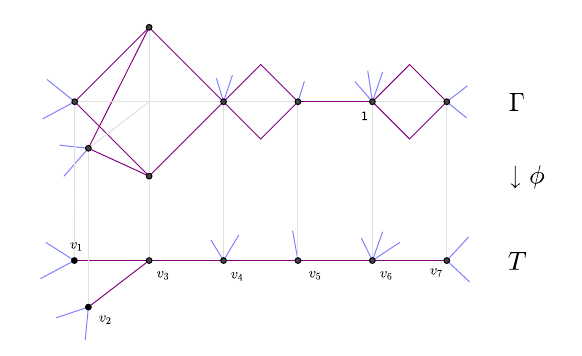}
\caption{The tree $T$ with 12 infinite leaves from Example \ref{algoex} and the hyperelliptic \tropcurve $\Gamma$ of genus $5$ which admissibly covers $T$ by $\phi$.}
\label{fig-hyperelliptic-ex}
\end{center}
\end{figure}

\end{example}

The following theorem shows that this metric graph is actually the tropicalization of a hyperelliptic curve.

\noindent
\begin{minipage}{\textwidth}
\begin{theorem} Let $g \geq 1$ be an integer. Let $X$ be a hyperelliptic curve of genus $g$ over $\Bbbk$, given by taking the double cover of $\mathbb{P}^1$ ramified at $2g+2$ points $p_1, \ldots, p_{2g+2}$. If $T$ is the tree which corresponds to the tropicalization of $\mathbb{P}^1$ with the marked points $p_1, \ldots, p_{2g+2}$ described above, and $\Gamma$ is the unique hyperelliptic \tropcurve which admits an admissible cover to $T$, then $\Gamma$ is the abstract tropicalization of $X$.
\end{theorem}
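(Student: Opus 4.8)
The plan is to produce an explicit semistable model of $X$ and to identify its weighted dual metric graph with $\Gamma$, combining stable reduction of the marked curve $(\PP^1;p_{1},\dots,p_{2g+2})$ with the theory of admissible covers. Since $X$ is presented as a double cover branched at $2g+2$ points, I assume $\mathrm{char}\,\Bbbk\neq 2$ throughout; the genus $2$ case is already \cite[Section 5]{RSS}, and what follows is the generalization.

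First I would recall that, by definition of the point of $\mathcal{M}_{0,2g+2}^{tr}$ attached to $(\PP^1;p_{1},\dots,p_{2g+2})$, the tree $T$ is the weighted dual metric graph --- a tree with all weights zero, and with an infinite leaf at each specialized marked point --- of the special fiber of the stable model $\mathcal{P}_0$ of $(\PP^1;p_{1},\dots,p_{2g+2})$, the length of a finite edge being the valuation of the smoothing parameter of the corresponding node; this is the identification of a point of $\mathcal{M}_{0,2g+2}^{tr}$ with its phylogenetic tree recalled above. Next, applying semistable reduction to the finite degree $2$ map $X\to\PP^1$ --- equivalently, invoking the Harris-Mumford construction of admissible covers \cite{admissiblecover}; no base change is needed since the value group of $\Bbbk$ is divisible --- I would obtain a semistable model $\mathcal{X}$ of $X$ together with a finite degree $2$ morphism $\mathcal{X}\to\mathcal{P}$ onto a semistable model $\mathcal{P}$ of $\PP^1$ refining $\mathcal{P}_0$, whose restriction to special fibers is an admissible double cover of nodal curves branched along the specializations of $p_{1},\dots,p_{2g+2}$. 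By definition the weighted dual metric graph $\Gamma'$ of the special fiber of $\mathcal{X}$ is the abstract tropicalization of $X$, and the dual tree of the special fiber of $\mathcal{P}$ is $T$ with some edges possibly subdivided.

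The heart of the proof is to identify $\Gamma'$ with the graph $\Gamma$ produced by Lemma \ref{treelemma}. The finite morphism of special fibers induces a finite harmonic morphism $\Gamma'\to T$ of degree $2$, and I would check that it is an admissible cover in the combinatorial sense defined above. On edges this is the standard local picture of an admissible cover at a node: above a node with smoothing parameter of valuation $\ell$, a split node again has valuation $\ell$, while a node of ramification index $2$ has valuation $\ell/2$, because $uv=t^{\ell}$ with $u=x^{2}$ and $v=y^{2}$ forces $xy=t^{\ell/2}$; this is exactly the edge-length rule of Lemma \ref{treelemma}, and the bridges of $\Gamma$ are precisely the edges over which the cover is ramified. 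On vertices, the Riemann-Hurwitz formula applied to the tamely ramified restriction of the cover to a genus $0$ component of the base carrying $2r$ branch points --- marked points together with ramified nodes --- shows that this component is covered by a single component of genus $r-1$ when $r\geq 1$ and by two components of genus $0$ when $r=0$, the parity constraint on each component dictating which nodes ramify; this is precisely cases (1)--(3) in the proof of Lemma \ref{treelemma}. Hence $\Gamma'\to T$ satisfies the local Riemann-Hurwitz condition at every vertex of the canonical loopless model, so $\Gamma'$ is a hyperelliptic \tropcurve admitting an admissible cover to $T$. Since inserting a valence $2$, weight $0$ vertex on an edge of $T$ does not change the total space of such a cover --- it merely splits a split edge or subdivides a bridge --- the uniqueness assertion of Lemma \ref{treelemma} forces $\Gamma'=\Gamma$, which proves the theorem.

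I expect the main obstacle to be the compatibility invoked in the second step: one must know that stable reduction of $X\to\PP^1$ is an admissible cover over a refinement of the \emph{given} model $\mathcal{P}_0$ of $\PP^1$, rather than over some a priori unrelated semistable model, and that its ramification indices, node thicknesses and component genera reproduce the recipe of Lemma \ref{treelemma} on the nose, so that the two notions of admissible cover --- of nodal curves and of metric graphs --- literally coincide here. This dictionary is in essence the content of \cite[Section 4]{liftingharmonic}; granting it, the theorem follows formally from the uniqueness in Lemma \ref{treelemma}.
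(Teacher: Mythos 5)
Your argument is sound in outline and reaches the right conclusion, but it follows a genuinely different route from the paper. The paper's proof is a short moduli-theoretic diagram chase: it views the hyperelliptic locus as the space $\overline{H}_{g\rightarrow 0,2}((2),\ldots,(2))$ of admissible covers, cites the commutativity of tropicalization with the source and branch morphisms (Theorem~4 of \cite{admissiblecover}), and then uses the uniqueness in Lemma~\ref{treelemma} to invert $br^{\trop}$, so that $\trop(src^{an}(X)) = src^{\trop}((br^{\trop})^{-1}(T)) = \Gamma$. You instead work at the level of a single curve: extend $X\to\PP^1$ to a finite degree~$2$ morphism of semistable models over the valuation ring, and verify by hand that the induced map of skeleta is a degree~$2$ harmonic morphism satisfying the local Riemann--Hurwitz condition, with the correct edge dilations (the $uv=t^{\ell}$, $xy=t^{\ell/2}$ computation) and vertex genera (Riemann--Hurwitz on each component, reproducing cases (1)--(3) of Lemma~\ref{treelemma}); uniqueness then gives $\Gamma'=\Gamma$. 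Both proofs pivot on the same uniqueness statement, so the essential content is shared, but what you prove locally is exactly what the paper outsources globally: the compatibility you flag --- that semistable reduction of the cover can be arranged over (a refinement of) the given stable model of the marked $\PP^1$, and that finite morphisms of semistable models induce harmonic morphisms of skeleta with the stated dilation factors and Riemann--Hurwitz data --- is precisely what Theorem~4 of \cite{admissiblecover} packages at the moduli level, and it is also available in the form you cite via \cite{liftingharmonic}. Your version is more explicit and self-contained (it exhibits the model and makes the length-halving and weight formulas transparent, and your observation that subdividing $T$ at weight-zero valence-two vertices does not change the covering graph correctly handles the possible refinement), at the cost of having to carefully invoke that skeleton/admissible-cover dictionary; the paper's version is shorter but leans entirely on the tropicalized moduli space of admissible covers.
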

\end{minipage}

\begin{proof}
This follows from \cite{admissiblecover}, Remark 20 and Theorem 4. Indeed, the hyperelliptic locus of $\mathcal{M}_g$ can be understood as the space  $\overline{H}_{g\rightarrow 0,2}((2),\ldots, (2))$ of admissible covers with $2g+2$ ramification points of order 2. Its tropicalization is constructed and studied in \cite{admissiblecover}. The space $\overline{H}^{an}_{g\rightarrow 0,2}((2),\ldots, (2))$ is the Berkovich analytification of $\overline{H}_{g\rightarrow 0,2}((2),\ldots, (2))$, and thus a point $X$ is represented by an admissible cover over $\text{Spec}(K)$ with $2g+2$ ramification points of order 2. By Theorem 4 in \cite{admissiblecover}, the diagram
\begin{equation} 
\xymatrix{
&\overline{H}^{an}_{g\rightarrow 0,2}((2),\ldots, (2)) \ar[rd]^{src^{an}}\ar[ld]_{br^{an}} \ar[dd]_{\trop} &\\
\overline{\mathcal{M}}^{an}_{0,2g+2} \ar[dd]_{\trop} & & \overline{\mathcal{M}}^{an}_g \ar[dd]_{\trop}  \\
& \overline{H}^{\trop}_{g\rightarrow 0,2}((2),\ldots, (2))  \ar[rd]^{src^{\trop}}\ar[ld]_{br^{\trop}}&\\
\overline{\mathcal{M}}^{\trop}_{0,2g+2} & & \overline{\mathcal{M}}^{\trop}_g  \\
}
\end{equation}
commutes. The morphisms $src$ take a cover to its source curve, marked at the entire inverse image of the branch locus, and the morphisms $br$ take a cover to its base curve, marked at its branch points. We start with an element $X$ of $\overline{H}^{an}_{g\rightarrow 0,2}((2),\ldots, (2))$, and we wish to find $\trop(src^{an}(X)) \in \overline{\mathcal{M}_g}^{trop}$. The unicity in Lemma \ref{treelemma} enables us to find an inverse for $br^{\trop}$. Then $T = \trop(br^{an}(X))$, and so by commutativity of the diagram, $\trop(src^{an}(X)) =  src^{\trop}((br^{\trop})^{-1}(T)) = \Gamma$. 
\qed \end{proof}

\begin{example}(\cite[Problem~2 on Curves]{Sturmfels}) Consider the curve
$$
y^2 = (x-1)(x-2)(x-3)(x-6)(x-7)(x-8)
$$
with the 5-adic valuation. In $\mathcal{M}_{0,6}^{trop}$, this gives us the point
$$
(0,0,1,0,0,0,0,1,0,0,0,1,0,0,0) = (p_{1,2}, p_{1,3}, \ldots, p_{5,6}).
$$
When $n$ is any integer, this gives us a tree metric of
$$
(n,n,n-2,n,n,n,n,n-2,n,n,n,n-2,n,n,n) = (d_{1,2}, d_{1,3}, \ldots, d_{5,6}).
$$
Then, this is a metric for the tree on the left of Figure \ref{bookexample}, and the metric graph on the right.
\begin{figure}
\begin{center}
\includegraphics[height=1.5in]{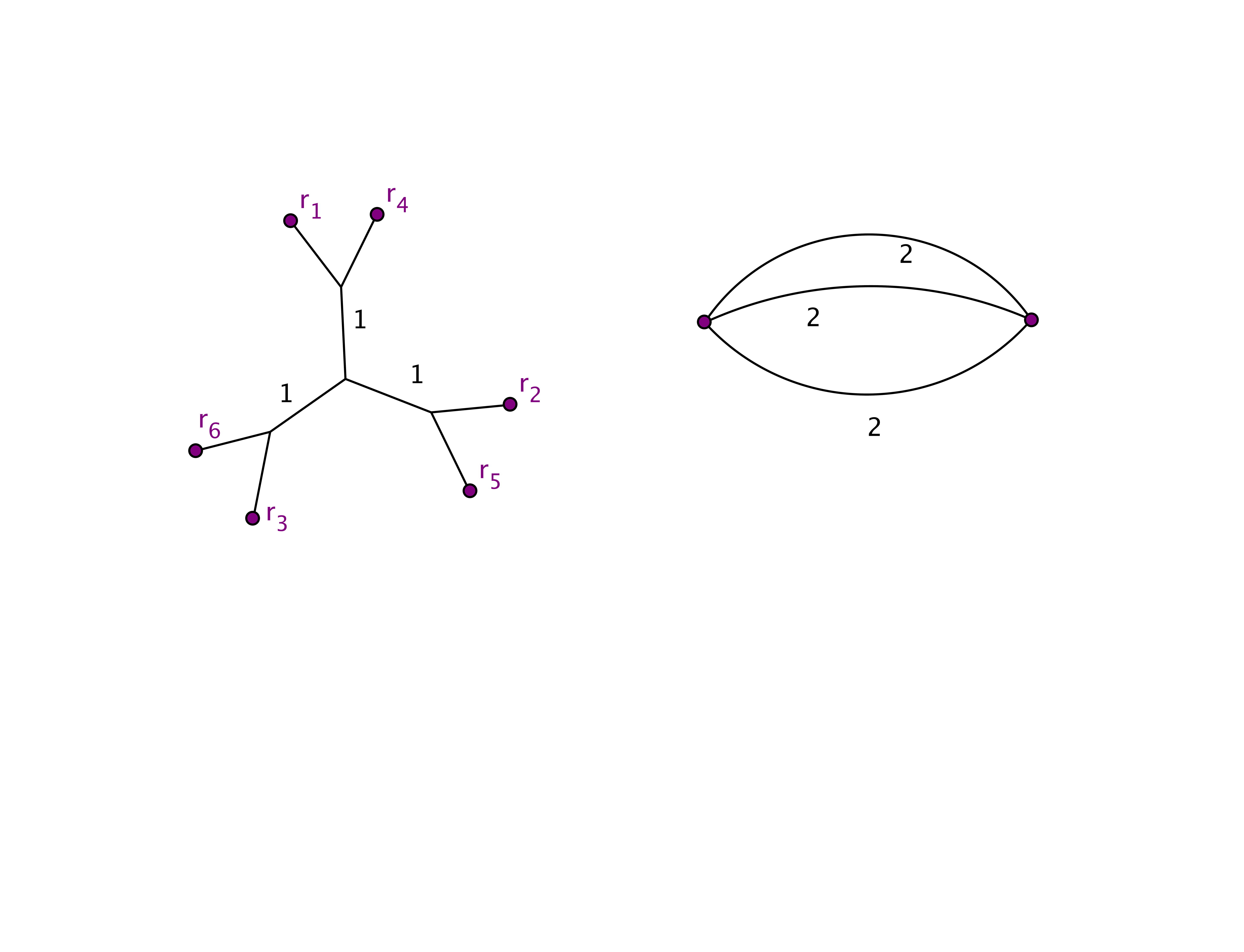}
\caption{This is the tree and metric graph for Problem 2 from the \emph{Curves} worksheet.}
\label{bookexample}
\end{center}
\end{figure}

\end{example}

\section{Other Curves}
\label{generalcurves}

Outside of the hyperelliptic case, finding the abstract tropicalization of a curve is very hard. In this section, we highlight some of the difficulties and discuss two approaches to this problem: faithful tropicalization and semistable reduction. We offer the following example as motivation for why this is a difficult problem.

\begin{example}(\cite[Problem~9 on Abelian Combinatorics]{Sturmfels})
We begin with a curve in $\mathbb{P}^2$, given by the zero locus of
\begin{align*}
 f(x,y,z)
=&\  41x^4+1530x^3y+3508x^3z+1424x^2y^2+2490x^2yz\\
&-2274x^2z^2+470xy^3 +680xy^2z-930xyz^2+772xz^3\\
&+535y^4-350y^3z-1960y^2z^2-3090yz^3-2047z^4,
\end{align*}
defined over $\mathbb{Q}_2$.
Here, the induced regular subdivision of the Newton polygon will be trivial, since the 2-adic valuation of the coefficients on the $x^4,\ y^4,\ z^4$ terms is each 0. Therefore, we can detect no information about the structure of the abstract tropicalization from this embedded tropicalization. In Example \ref{problem9cont}, we will pick a nice enough change of coordinates to allow us to find a faithful tropicalization.
\end{example}

\subsection{Faithful Tropicalization} 
We briefly discuss the \emph{Berkovich skeleton} of a curve. Let $\Bbbk$ be an algebraically closed field which is complete with respect to a nontrivial, nonarchimedean valuation $v$. Let $X$ be a nonsingular  curve defined over $\Bbbk$. To simplify the definitions in this exposition, we restrict to the case when $X$ is affine, and for a thorough explanation the reader may consult \cite{berkovich}. 
The \emph{Berkovich analytification} $X^{an}$ is a topological space whose ground set consists of all multiplicative seminorms on the coordinate ring $\Bbbk[X]$ that are compatible with the valuation $v$ on $\Bbbk$. It has the coarsest topology such that for every $f \in \Bbbk[X]$, the map sending a seminorm $|\cdot |$ to $|f|$ is continuous. We note that this is different from the metric structure, see \cite[5.3]{pbr}.
 When $X$ is a smooth, proper, geometrically integral curve of genus greater than or equal to 1, $X^{an}$ admits strong deformation retracts onto finite metric graphs called \emph{skeletons} of $X^{an}$ \cite{baker}. There is also a unique \emph{minimal skeleton}.

The minimal skeleton of the Berkovich analytification is the same metric graph as the dual graph of the special fiber of a stable model of a curve $X$. Let $i: X \hookrightarrow \mathbb{A}^n$ be an embedding of $X$, which gives generators $f_1, \ldots, f_m$ of the coordinate ring $\Bbbk[X]$. Denote its embedded tropicalization by $\Trop(X,i)$. Details about how to carry this out (including how to find the metric on $\Trop(X,i)$) can be found in \cite{tropicalbook}. We can relate the Berkovich analytification to embedded tropicalizations in the following way.

\begin{theorem}\cite[Theorem 1.1]{inverselimit}
Let $X$ be an affine variety over $\Bbbk$. Then there is a homeomorphism
\begin{equation}
X^{an} \rightarrow \varprojlim \Trop(X,i).
\end{equation}
\end{theorem}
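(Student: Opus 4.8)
The plan is to realize $X^{an}$ as the limit of its tropicalizations by producing, for each closed embedding, a canonical continuous map to the corresponding tropical variety, assembling these into a map $\pi\colon X^{an}\to\varprojlim\Trop(X,i)$, and then checking that $\pi$ is a homeomorphism by an initial-topology argument. Write $A=\Bbbk[X]$. First I would make the inverse system precise: index it by the directed poset of finite subsets $S=\{x_1,\dots,x_n\}\subset A$ that generate $A$ as a $\Bbbk$-algebra, ordered by inclusion; each $S$ determines a closed embedding $i_S\colon X\hookrightarrow\A^n$ and hence an extended tropical variety $\Trop(X,i_S)\subset(\RR\cup\{\infty\})^n$, and if $S\subseteq S'$ the linear projection forgetting the extra coordinates restricts to the transition map $\Trop(X,i_{S'})\to\Trop(X,i_S)$. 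Any two embeddings are dominated by the one attached to the union of their coordinate sets, so the system is directed and the limit makes sense.

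Next I would define the map. A point $\gamma\in X^{an}$ is a multiplicative seminorm on $A$ extending $|\cdot|_v$ on $\Bbbk$; set $\trop_S(\gamma)=(\val x_1,\dots,\val x_n)$, where $\val=-\log|\cdot|_\gamma$. Since $\Trop(X,i_S)$ is exactly the image of $X^{an}$ under the coordinatewise valuation map (the fundamental theorem of tropical geometry / Bieri--Groves, as used in \cite{tropicalbook}), this lands in $\Trop(X,i_S)$, and the $\trop_S$ are visibly compatible with coordinate projections, hence induce $\pi$. Each $\trop_S$ is continuous because the Berkovich topology on $X^{an}$ is the coarsest for which $\gamma\mapsto|f|_\gamma$ is continuous for all $f\in A$, and the coordinates of $\trop_S$ are such maps post-composed with $-\log$; therefore $\pi$ is continuous into the inverse limit. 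Injectivity is immediate: distinct seminorms disagree on some $f\in A$, and any embedding whose coordinate set contains $f$ separates their images.

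For surjectivity, I would start from a compatible family $(p_S)$ and define, for $f\in A$, a value $\gamma(f)$ by choosing an embedding with $f$ among its coordinates and exponentiating the negative of the corresponding coordinate of $p_S$; compatibility under coordinate projections makes this independent of the choice. The remaining work is to verify the seminorm axioms. Multiplicativity $\gamma(fg)=\gamma(f)\gamma(g)$ is obtained by working in an embedding whose coordinates include $f$, $g$, $fg$: the binomial $x_{fg}-x_fx_g$ lies in the ideal of $X$, so at every point of $\Trop$ its two tropical monomials must attain the minimum, i.e. $\val x_{fg}=\val x_f+\val x_g$; the ultrametric inequality $\gamma(f+g)\le\max(\gamma(f),\gamma(g))$ comes the same way from $x_{f+g}-x_f-x_g$, which forces $\val x_{f+g}\ge\min(\val x_f,\val x_g)$; and compatibility with $v$ comes from the relations $x_c=c$ for $c\in\Bbbk$. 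Thus $\gamma\in X^{an}$ and $\pi(\gamma)=(p_S)$. Finally, $\pi$ is a homeomorphism because both source and target carry the initial topology with respect to their maps to the $\Trop(X,i_S)$: the subbasic opens $\{\gamma:|f|_\gamma\in(a,b)\}$ of $X^{an}$ are precisely preimages under some $\trop_S$ of open strips, while the limit topology is by construction generated by preimages of opens in the $\Trop(X,i_S)$; a continuous bijection of spaces carrying matching initial topologies is a homeomorphism.

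The main obstacle is the surjectivity step: showing that a merely \emph{compatible} family of points in the tropicalizations patches to a bona fide multiplicative seminorm on all of $A$. This is exactly where one must exploit that each $\Trop(X,i_S)$ genuinely satisfies the tropicalized polynomial relations among the coordinates -- the balancing/attainment-twice content of the fundamental theorem -- rather than being an arbitrary polyhedral set, and where one needs the extended tropicalization allowing the value $\infty$ so that seminorms vanishing on some coordinate are accounted for. The rest of the argument is essentially bookkeeping about the directed system and the two initial topologies.
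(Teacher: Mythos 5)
Your proposal is correct and takes essentially the same route as the source the paper relies on: the paper does not reprove this theorem but only cites it and records the maps $\pi_i(x) = (-\log|f_1|_x,\ldots,-\log|f_m|_x)$, which are exactly your maps $\trop_S$, and the cited proof proceeds just as you do (directed system of embeddings from finite generating sets, well-definedness and the seminorm axioms for a point of the limit via binomial and trinomial relations in the ideal, and the identification of the two initial topologies). The one technical point, which you already flag, is handling the boundary strata of the extended tropicalization where coordinates equal $\infty$, so that seminorms with zeros are accounted for.
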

The homeomorphism is given by the inverse limit of maps $\pi_i: X^{an} \rightarrow \mathbb{A}^m$ defined by
\begin{equation}
\pi_i (x) = (-\log |f_1|_x, \ldots, -\log |f_m|_x),
\end{equation}
where $|\cdot|_x$ denotes the norm corresponding to the point $x \in X^{an}$. The image of this map is equal to $\Trop(X,i)$.

Given just one embedded tropicalization, we want to detect information about the Berkovich skeleton. This is the problem of \emph{certifying faithfulness}, as studied in \cite[5.23]{bpr}.

In some cases, the embedded tropicalization contains enough information to determine the structure of the skeleton of $X^{an}$.

\begin{theorem} Let $X$ be a smooth curve in $\mathbb{P}_\Bbbk^n$ of genus $g$. Further, suppose that $\dim(H_1(\Trop(X),\mathbb{R}))=g$, all vertices of $\Trop(X)\subset \mathbb{R}^{n+1}/\mathbb{R}\mathbf{1}$ are trivalent, and all edges have multiplicity 1. Then the minimal skeleta of $\Trop(X)$ and $X^{an}$ are isometric. In particular, if $X$ is a smooth curve in $\mathbb{P}_\Bbbk^2$ whose Newton polygon and subdivision form a unimodular triangulation, then the minimal skeleta of $\Trop(X)$ and $X^{an}$ are isometric.
\label{bprthm}
\end{theorem}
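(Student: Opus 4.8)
The plan is to prove a faithfulness statement, so I would compare the minimal skeleton $\Sigma$ of $X^{an}$ with the minimal skeleton of $\Trop(X)$ by analysing the tropicalization map $\trop\colon X^{an}\to\mathbb{R}^{n+1}/\mathbb{R}\mathbf{1}$, whose image is $\Trop(X)$; the engine is the metric structure theory of \cite{bpr}. First I would recall from \cite{bpr} that there is a finite skeleton $\Gamma$ of $X^{an}$ containing $\Sigma$, with $\trop(\Gamma)=\Trop(X)$, such that $\trop|_\Gamma$ is piecewise affine with integer slopes and, after passing to a common refinement of the edge structures of $\Gamma$ and of $\Trop(X)$, each edge of $\Gamma$ is carried either onto an edge of $\Trop(X)$ or onto a vertex. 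The key quantitative input is the Poincar\'e--Lelong (slope) formula: for each edge $\bar e$ of $\Trop(X)$ the tropical multiplicity $m_{\bar e}$ equals $\sum_{e\mapsto\bar e}d_e$, summed over the edges $e$ of $\Gamma$ mapping onto $\bar e$, where $d_e\in\mathbb{Z}_{>0}$ is the expansion factor of $\trop|_e$ (the factor by which it stretches length). By the faithfulness criterion of \cite{bpr}, $\trop$ restricts to an isometry of $\Sigma$ onto the minimal skeleton of $\Trop(X)$ as soon as $\trop|_\Sigma$ is injective and all its expansion factors equal $1$, so the theorem reduces to extracting those two conditions from the hypotheses.

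The hypothesis $m_{\bar e}=1$ handles the expansion factors immediately: each $d_e$ is a positive integer and they sum to $1$, so every edge $\bar e$ of $\Trop(X)$ has a \emph{unique} preimage edge $e(\bar e)$ in $\Gamma$, with $d_{e(\bar e)}=1$, and $\trop$ carries $e(\bar e)$ isometrically onto $\bar e$. Letting $\Gamma_0\subseteq\Gamma$ be the union of the edges that $\trop$ collapses to vertices, the map $\trop|_\Gamma$ is constant on each connected component of $\Gamma_0$ and descends to $\overline{\trop}\colon\Gamma/\Gamma_0\to\Trop(X)$, which is an isometry on every edge, a bijection on edge sets, and surjective on vertices. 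What is still missing is the injectivity of $\trop|_\Sigma$, equivalently that $\overline{\trop}$ identifies no two vertices and that $\Gamma_0$ is disjoint from $\Sigma$.

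This injectivity is the step I expect to be the genuine obstacle, and it is where the trivalence and Betti-number hypotheses are indispensable; it is also where one leans hardest on \cite{bpr}. A fold (two edges of $\Sigma$ at a common vertex mapped onto one segment) is excluded directly, since it would give that segment tropical multiplicity $\ge 2$; the subtler facts, that $\trop$ collapses no cycle of $\Sigma$ and merges no two vertices of $\Gamma/\Gamma_0$, are what the trivalence of $\Trop(X)$ together with $\dim H_1(\Trop(X),\mathbb{R})=g$ buy in \cite{bpr} (morally: collapsing or folding a cycle would leave the target with fewer independent cycles than $\Sigma$, and a vertex identification at a $3$-valent point of $\Trop(X)$ is incompatible with $\Sigma$ being minimal of the right genus). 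Granting injectivity, $\overline{\trop}$ is a graph isomorphism, and a genus count pins down $\Gamma_0$: writing $g'=\dim H_1(\Sigma,\mathbb{R})=\dim H_1(\Gamma,\mathbb{R})$, one always has $g'\le g$ since the dual graph of the special fiber of a stable model of $X$ has first Betti number at most the arithmetic genus $g$, while $g=\dim H_1(\Trop(X),\mathbb{R})=\dim H_1(\Gamma/\Gamma_0,\mathbb{R})=g'-\dim H_1(\Gamma_0,\mathbb{R})\le g'$; hence $g'=g$ and $\Gamma_0$ is a forest lying in the trees attached to $\Sigma$ inside $\Gamma$. Therefore $\Gamma/\Gamma_0$ has the same minimal skeleton as $\Gamma$, namely $\Sigma$, and — since every vertex of $\Trop(X)$ is trivalent, so that $\Trop(X)$ equals its own minimal skeleton after deleting leaf edges — $\Sigma$ is isometric to the minimal skeleton of $\Trop(X)$, as asserted.

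For the ``in particular'' clause I would verify that a smooth curve $X\subset\mathbb{P}^2_\Bbbk$ whose Newton polygon $P$ carries a unimodular triangulation satisfies all three hypotheses. The graph $\Trop(X)$ is dual to the triangulation, so every bounded vertex is dual to a lattice triangle of area $\tfrac{1}{2}$ and is hence trivalent; every bounded edge of $\Trop(X)$ is dual to an edge of such a triangle, which is primitive (a unimodular triangle has no lattice points besides its three vertices, by Pick's theorem), so every edge multiplicity equals $1$; and $\dim H_1(\Trop(X),\mathbb{R})$ equals the number of interior lattice points of $P$, which is the genus $g$ of $X$. Beyond the structural input from \cite{bpr} highlighted above, the only further care needed is that the common refinement genuinely sends collapsed edges to vertices of $\Trop(X)$, and that the unbounded rays of $\Trop(X)$, which lie outside every minimal skeleton, do not disturb the counts.
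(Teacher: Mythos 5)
Your outline gets the easy half right (multiplicity $1$ plus the slope/Poincar\'e--Lelong formula forces a unique preimage edge with expansion factor $1$ for every edge of $\Trop(X)$), but the step you yourself flag as ``the genuine obstacle'' --- injectivity of $\trop$ on the skeleton, i.e.\ no folded or collapsed cycles and no identified vertices --- is precisely the content of the Baker--Payne--Rabinoff certification theorem, and your proposal does not actually close it. The ``moral'' Betti-number argument you sketch is not sufficient: granting the edge bijection, an Euler-characteristic count only gives $g=\bigl(g'-\dim H_1(\Gamma_0,\mathbb{R})\bigr)+k$, where $k$ is the number of vertex identifications, and together with $g'\le g$ this yields $k\ge \dim H_1(\Gamma_0,\mathbb{R})\ge 0$, which does not force $k=0$. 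Moreover your conclusion that $\Gamma_0$ is a forest is derived only \emph{after} ``granting injectivity,'' so the argument is circular as written: the trivalence hypothesis has to enter through a genuine local analysis of the map (this is what \cite[Corollary 5.2.8]{bpr} does), not through a global cycle count.

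The paper's proof takes a different and much shorter route: it quotes \cite[Corollary 5.2.8]{bpr} as a black box, whose hypotheses concern both $\Trop(X)$ (trivalent vertices, multiplicity-$1$ edges) and the minimal skeleton $\Sigma$ of $X^{an}$ ($\Sigma$ has no leaves, $\dim H_1(\Trop(X),\mathbb{R})=\dim H_1(\Sigma,\mathbb{R})$), and then observes that the $\Sigma$-hypotheses follow from the single hypothesis $\dim H_1(\Trop(X),\mathbb{R})=g$: since $g\ge\dim H_1(\Sigma,\mathbb{R})\ge\dim H_1(\Trop(X),\mathbb{R})$, equality holds throughout, and a minimal skeleton can have leaves only when $g>\dim H_1(\Sigma,\mathbb{R})$ (a leaf of a minimal skeleton must carry positive weight). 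You do reproduce a version of this reduction in your genus count, but it is embedded inside an incomplete re-derivation of the BPR result itself. The repair is either to cite the precise corollary and perform only this hypothesis reduction, as the paper does, or to actually supply the local injectivity argument rather than gesture at \cite{bpr}. (Your verification of the ``in particular'' clause is fine in spirit, though note it tacitly uses that the genus equals the number of interior lattice points of the Newton polygon, i.e.\ nondegeneracy of $X$ with respect to its Newton polygon.)
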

\begin{proof} 
This follows from a result of Baker, Payne, and Rabinoff \cite[Corollary 5.2.8]{bpr} who assume instead that all vertices of $\Trop(X)\subset \mathbb{R}^{n+1}/\mathbb{R}1$ are trivalent, all edges have multiplicity 1, $\Sigma$ has no leaves, and that $\dim(H_1(\Trop(X),\mathbb{R})) = \dim(\Sigma,\mathbb{R}))$.
 In our case, we are trying to detect information about the minimal skeleton $\Sigma$ of $X^{an}$, so we have reduced to the case in which we can remove assumptions about $\Sigma$ from the hypotheses. 
 
 In particular, we have that ${g \geq \dim (H_1(\Sigma , \mathbb{R})) \geq \dim (H_1(\Trop(X),\mathbb{R}))}$, so in the case when $g= \dim(H_1(\Trop(X),\mathbb{R}))$, we also have $g= \dim(H_1(\Sigma , \mathbb{R}))$. Furthermore, it is only possible that $\Sigma$ has leaves when  $g>\dim(H_1(\Sigma , \mathbb{R}))$.
\qed \end{proof}

The next example illustrates how to apply this Theorem to find the metric graph of the curve given in Problem 9.

\begin{example}[Problem 9, Continued]
\label{problem9cont}
We apply a change of coordinates
$$
x = \frac{1}{12}X+\frac{1}{2}Y-\frac{1}{12}Z,\ \ 
y = \frac{1}{2}X-\frac{1}{2}Y,\ \ 
z = -\frac{5}{12}X-\frac{1}{12}Z.
$$
to obtain
$$
0 = -256X^3Y-2X^2Y^2-256XY^3-8X^2YZ-8XY^2Z-XYZ^2-
2XZ^3-2YZ^3.
$$
We calculate the regular subdivision of the Newton polyton in \emph{Polymake 3.0} \cite{polymake}, weighted by the 2-adic valuations of the coefficients. 

The embedded tropicalization and corresponding metric graph, with edge lengths, are depicted in Figure~\ref{ex9fig}. Since we have that all vertices are trivalent, all edges have multiplicity 1, and $\dim(H_1(\Trop(X),\mathbb{R})) = 3$, we have by Theorem \ref{bprthm} that this is the abstract tropicalization of the curve.
\begin{figure}
\centering
\begin{minipage}{.5\textwidth}
  \centering
  \includegraphics[width=.7\linewidth]{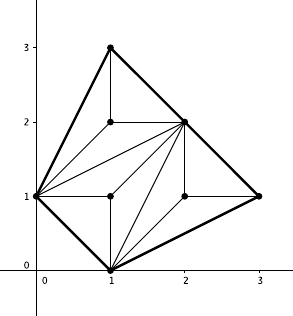}
\end{minipage}%
\begin{minipage}{.5\textwidth}
  \centering
\includegraphics[width=.7\linewidth]{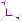}
\end{minipage}
\begin{minipage}{.5\textwidth}
  \centering
\includegraphics[width=.8\linewidth]{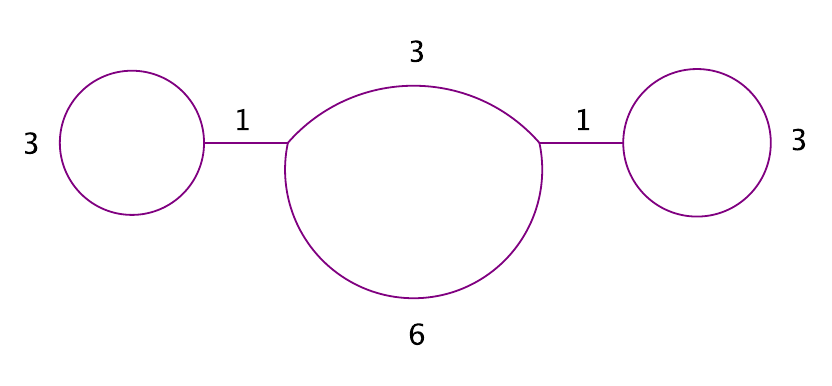}
\end{minipage}
\caption{The Newton polygon from Example \ref{problem9cont} together with its unimodular triangulation on the upper left, the embedded tropicalization on the upper right, and the metric graph at the bottom.}
\label{ex9fig}
\end{figure}
\end{example}

\begin{example} We offer another example to illustrate some of the shortcomings of Theorem \ref{bprthm}. We will ultimately need to use semistable reduction to solve the problem in this case.
\label{melodyexample}
Consider the curve $X$ in $\mathbb{P}^2$ over $\mathbb{C}\{\!\{t\}\!\}$ defined by
$$
x y z^2 + x^2 y^2 + 29t(xz^3+yz^3) + 17t^2(x^3y+xy^3) = 0.
$$
Tropicalizing $X$ with this embedding, we obtain the embedded tropicalization in Figure \ref{melodyexamplefig}.
\begin{figure}
\centering
\begin{minipage}{.5\textwidth}
  \centering
  \includegraphics[width=.7\linewidth]{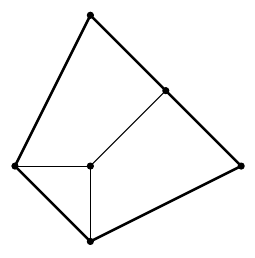}
\end{minipage}%
\begin{minipage}{.5\textwidth}
  \centering
\includegraphics[width=.7\linewidth]{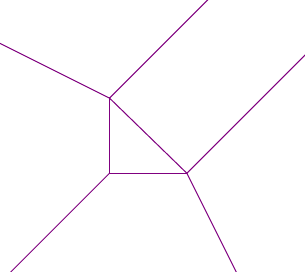}
\end{minipage}
\caption{The Newton polygon from Example \ref{melodyexample} with its regular subdivision on the left, and the embedded tropicalization on the right.}
\label{melodyexamplefig}
\end{figure}
Since this is not a unimodular triangulation, Theorem \ref{bprthm} does not allow us to draw any conclusions. However, we will see in the next section that this is, in fact, a faithful tropicalization.
\end{example}

\subsection{Semistable Reduction}

If we cannot certify faithfulness of a tropicalization (as in Example \ref{melodyexample}), we can instead find the metric graph $\Gamma$ by taking the dual graph of a semistable model for $X$ \cite{pbr}. To this end, we outline the process of finding the semistable model of a curve $X$. 

Let $X$ be a reduced, nodal curve over $\Bbbk$, and for each irreducible component $C$ of $X$, let $\phi: \tilde{C} \rightarrow C$ be the normalization of $C$. We say that $X$ is \emph{semistable} if every smooth rational component meets the rest of the curve in at least two points, or every component of $\tilde{C}$ has at least 2 points $x$ such that $\phi(x)$ is a singularity in $X$.

Let $R$ be the valuation ring of $\Bbbk$. Then $\text{Spec}(R)$ contains two points: one corresponding to the zero ideal $(0)$ and another corresponding to $m$, the maximal ideal of $R$. If $\mathcal{X}$ is a scheme over $\text{Spec}(R)$, we call the fiber of the the point corresponding to $(0)$ the \emph{generic fiber}, and the fiber over the point corresponding to $m$ the \emph{special fiber}.
\begin{definition}
 If $X$ is any finite type scheme over $\Bbbk$, a \emph{model} for $X$ is a flat and finite type scheme $\mathcal{X}$ over $R$ whose generic fiber is isomorphic to $X$. We call this model \emph{semistable} if the special fiber $\mathcal{X}_k = X \times_R k$ is a semistable curve over $k$. 
 \end{definition}
The curve $X$ always admits a semistable model, by the Semistable Reduction Theorem. The proofs of this theorem contain somewhat algorithmic approaches, see \cite{Deligne1969} and \cite{AW}.

Now, we describe a procedure for finding a semistable model for the curve $X$ when $\Bbbk$ has characteristic 0. A good reference is \cite{harris}.
The first step is to blow up the total space $\mathcal{X}$, removing any singularities in the special fiber, to arrive at a family whose special fiber is a nodal curve. At this point, our work is not yet done because the resulting curve will be nonreduced.

\begin{example}(Example \ref{melodyexample}, Continued) \label{ex-semistable-reduction} In this case, the special fiber is a conic with two tangent lines, depicted in Figure \ref{fig-semistable-ab}(a). We denote the conic by $C$ and the two lines by $l_1$ and $l_2$. We begin by blowing up the total space at the point $p_1$. The result, depicted in Figure \ref{fig-semistable-ab}(b), is that $l_1$ and $C$ are no longer tangent, but they do intersect in the exceptional divisor, which we call $e_1$. The exceptional divisor $e_1$ has multiplicity 2, coming from the multiplicity of the point $p_1$. In the figures, we denote the multiplicities of the components with grey integers, and a component with no integer is assumed to have multiplicity 1.
\begin{figure}
\begin{center}
\begin{tabular}{cc}
 \includegraphics[height=1.6in]{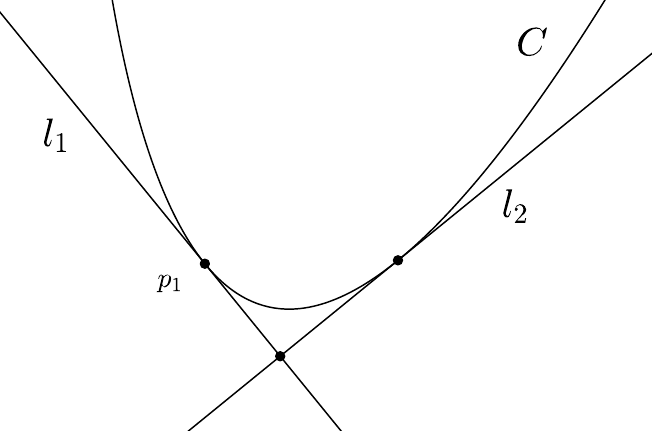} &   \includegraphics[height=1.6in]{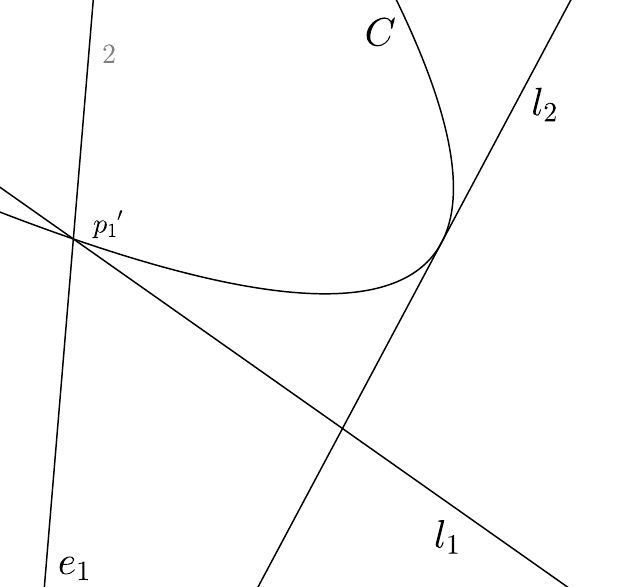} \\
 (a) & (b) \\ \\ 
\end{tabular}
\caption{Semistable reduction in Example \ref{ex-semistable-reduction}.}
\label{fig-semistable-ab}
\end{center}
\end{figure}
Next, we blow up the total space at the point labelled $p_1'$ to get Figure \ref{fig-semistable-cd}(c). We denote the new exceptional divisor by $e_1'$ with multiplicity 4, and the curves $l_1$ and $C$ no longer intersect. All points except $l_2\cap C$ are either smooth or have nodal singularities, so we repeat these two blowups here, obtaining the configuration in Figure \ref{fig-semistable-cd}(d).
\begin{figure}
\begin{center}
\begin{tabular}{cc}
 \includegraphics[height=1.6in]{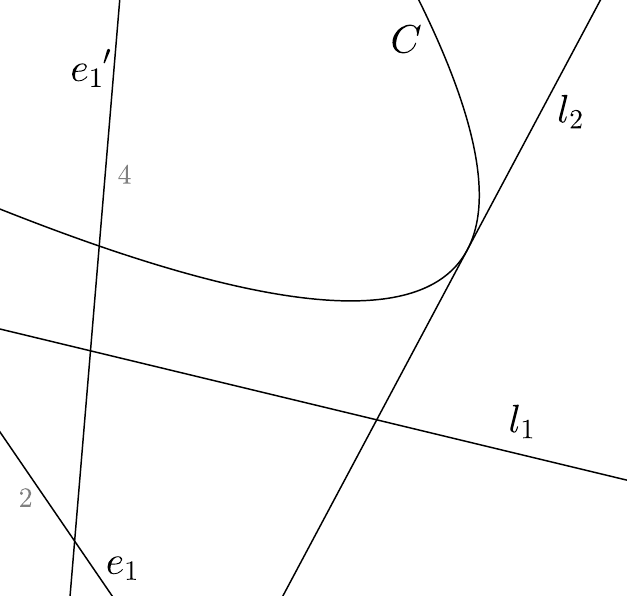} &   \includegraphics[height=1.6in]{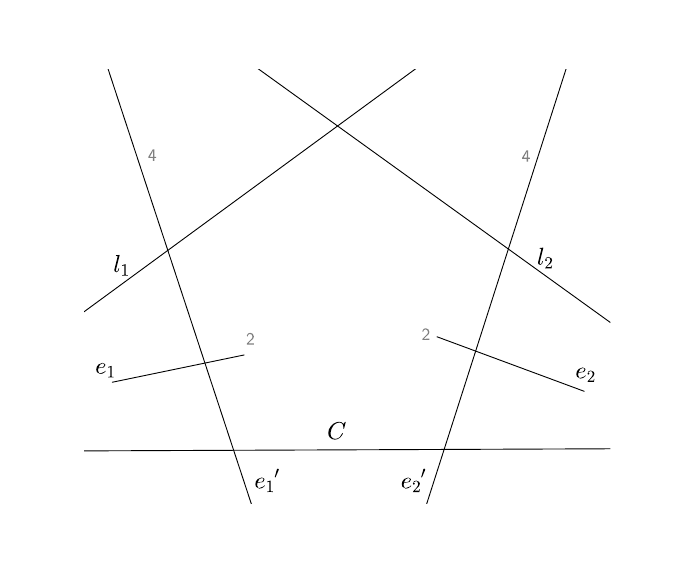} \\
  (c) & (d) \\
\end{tabular}
\caption{Semistable reduction in Example \ref{ex-semistable-reduction}.}
\label{fig-semistable-cd}
\end{center}
\end{figure}
\end{example}

\begin{remark} At this point, we have a family whose special fiber only has nodes as singularities, but it is not reduced. To fix this, we make successive base changes of prime order $p$. Explicitly, we take the $p$-th cover of the family branched along the special fiber. Then, if $D$ is a component of multiplicity $q$ in the special fiber, either $p$ does not divide $q$, in which case $D$ is in the branch locus, or else we obtain $p$ copies of $D$ branched along the points where $D$ meets the branch locus, and the multiplicity is reduced by $1/p$.
\end{remark}

\begin{example}(Example \ref{melodyexample}, Continued) \label{ex-semistable-2}
We must make two base changes of order 2. Starting with Figure \ref{fig-semistable-cd}(d) above, we see that $l_1$, $l_2$, and $C$ are in the branch locus. The curves $e_1'$ and $e_2'$ are replaced by the double cover of each of them branched at 2 points, which is again a rational curve. We continue to call these $e_1'$ and $e_2'$, and they each have multiplicity 2. Then, $e_1$ and $e_2$ are disjoint from the branch locus, so each one is replaced by two disjoint rational curves. The result is depicted in Figure \ref{fig-semistable-ef}(e). In the second base change of order 2, all components except $e_1'$ and $e_2'$ are in the branch locus. The curves $e_1'$ and $e_2'$ each meet the branch locus in 4 points, which, by the Riemann-Hurwitz theorem, means they will be replaced by genus 1 curves. The result is depicted in Figure \ref{fig-semistable-ef}(f).

\begin{figure}
\begin{center}
\begin{tabular}{cc}
 \includegraphics[height=1.6in]{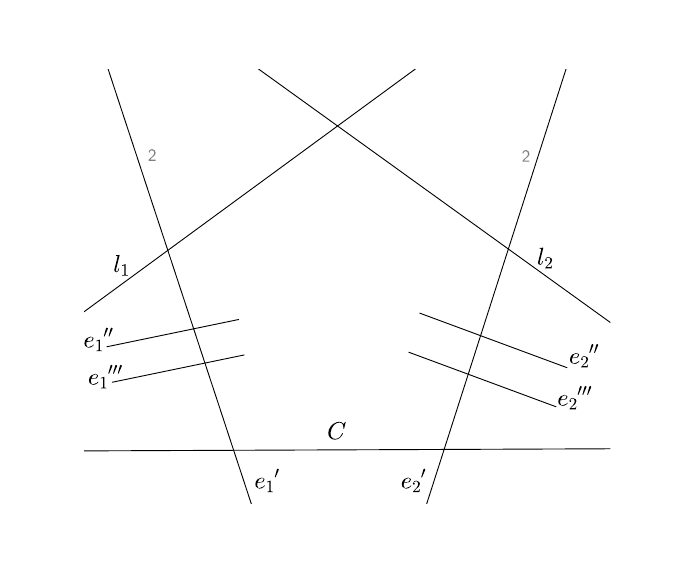} &   \includegraphics[height=1.6in]{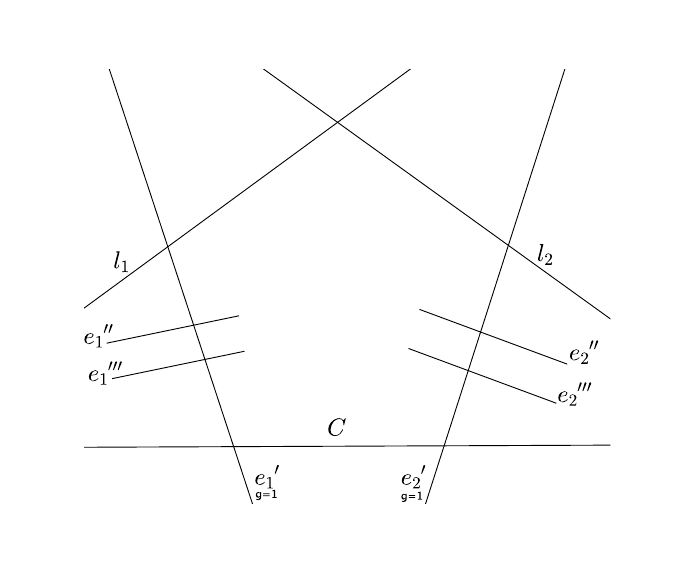} \\
 (e) & (f) \\ \\ 
\end{tabular}
\caption{Semistable reduction in Example \ref{ex-semistable-2}.}
\label{fig-semistable-ef}
\end{center}
\end{figure}
\end{example}
 
 The last step is to blow down all rational curves which meet the rest of the fiber exactly once, depicted in Figure \ref{melodyfinal}. This gives us a semistable model of $X$.

\begin{figure}
\begin{center}
 \includegraphics[height=1.7in]{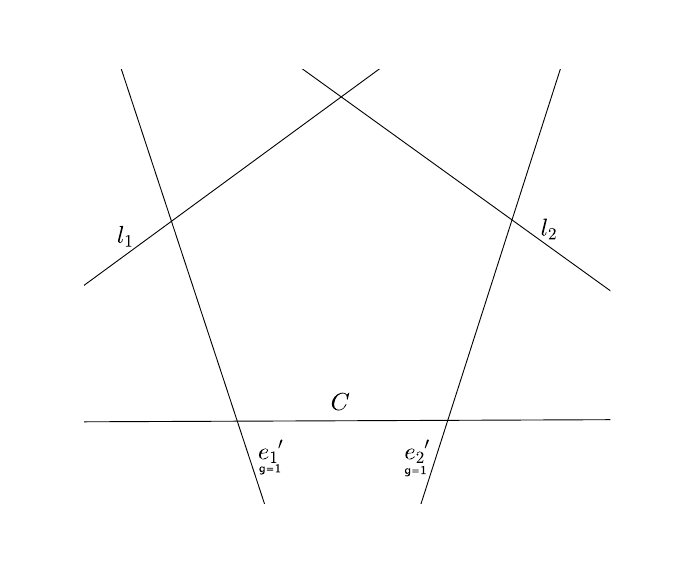}
 \caption{The special fiber of the semistable model obtained in Example \ref{melodyexample}}
 \label{melodyfinal}
 \end{center}
 \end{figure}

\subsection{Weighted Metric Graphs}
From a faithful tropicalization, the abstract tropicalization of a curve $X$ can be obtained simply by taking the minimal skeleton of $\Trop(X)$.
Given a semistable model $\mathcal{X}$ of $X$, this coincides with the \emph{dual graph} of $\mathcal{X}_k$ \cite{pbr}.
\begin{definition}
 Let $C_1, \ldots, C_n$ be the irreducible components of $\mathcal{X}_k$, the special fiber of a semistable model of $X$. 
The \emph{dual graph} of $\mathcal{X}_k$, $G$, is defined with vertices $v_i$ corresponding to the components $C_i$, where we set $w(v_i) = g(C_i)$. There is an edge $e_{ij}$ between $v_i$ and $v_j$ if  the corresponding components $C_i$ and $C_j$ intersect in a node $q$. Then, the completion of the local ring $\mathcal{O}_{\mathcal{X},q}$ is isomorphic to $R[[x,y]]/(xy-f)$, where $R$ is the valuation ring of $\Bbbk$, and $f \in m$, the maximal ideal of $R$. Then, we define $l(e_{ij}) = v(f)$.
\end{definition}

 \begin{example}(Example \ref{melodyexample}, Continued)
 \label{example7}
 Taking the dual graph of the semistable model found in the previous example, we obtain a cycle with two vertices of weight 1. By \cite[Theorem 5.24]{bpr}, the cycle that we observed in the embedded tropicalization actually gave a faithful tropicalization. This implies that the metric graph is as depicted in Figure \ref{melodyexfig}.
 \begin{figure}
 \begin{center}
 \includegraphics[height = 1in]{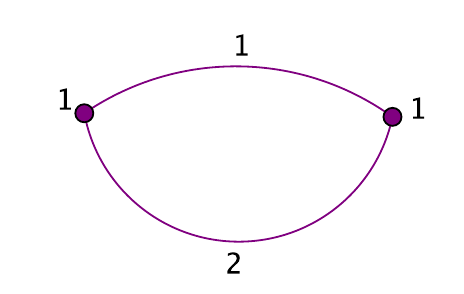}
 \caption{This is the metric graph from Example \ref{example7}. Since the vertices corresponding to $l_1$ and $l_2$ each have valence 2, we do not depict these in the model in this figure.}
 \label{melodyexfig}
 \end{center}
 \end{figure}
 \end{example}

\section{Period Matrices of Weighted Metric Graphs}
\label{periodmatrix}
Recall that a \tropcurve $\Gamma = (G,w,l)$ is a metric graph  with a model $G$, a function $w$ on $V(G)$ assigning nonnegative weights to the vertices, and a function $l$ on $E(G)$ assigning positive lengths to the edges. Given $\Gamma$, we describe a procedure to compute its period matrix, following \cite{mz, FMS, chan12}.

Fix an orientation of the edges of $G$. For any $e \in E(G)$, denote the source vertex by $s(e)$ and the target vertex by $t(e)$. Let $A$ be $\mathbb{R}$ or $\mathbb{Z}$. The spaces of 0-chains and 1-chains of $G$ with coefficients in $A$ are defined as
\begin{align}
C_0(G,A) = \left \{\sum_{v \in V(G)} a_v v\ |\ a_v \in A \right \}\,, \hspace{5mm}
C_1(G,A) = \left \{\sum_{e \in E(G)} a_e e\ |\ a_e \in A \right \}\,.
\end{align}

The module $C_1(G,\mathbb{R})$ is equipped with the inner product
\begin{align}
\label{innerproduct}
\left\langle \sum_{e\in E(G)} a_e e, \sum_{e\in E(G)} b_ee  \right\rangle = \sum_{e \in E(G)} a_eb_el(e)\,.
\end{align}

The \emph{boundary map} $\partial: C_1(G,A) \rightarrow C_0(G, A)$ acts linearly on 1-chains by mapping an edge $e$ to $t(e) - s(e)$. The kernel of this map is the first homology group $H_1(G,A)$ of $G$, whose rank is 
\begin{equation}
g(G)= |E(G)| -|V(G)|+1.
\end{equation}
 Let $|w| = \sum_{v\in V(G)} w(v)$, and let $g$ be the genus of $\Gamma$, defined as $g(G)+|w|$. Consider the positive semidefinite form $Q_\Gamma$ on $H_1(G,\mathbb{R}) \oplus \mathbb{R}^{|w|}$, which vanishes on the second summand $\mathbb{R}^{|w|}$ and is defined on $H_1(G,\mathbb{R})$ by
\begin{align}
Q_\Gamma \left(\sum_{e\in E(G)} \alpha_e e\right) = \sum_{e\in E(G)} \alpha_e^2 l(e)\,.
\end{align}

\begin{definition}
Let $\omega_1,\ldots, \omega_{g(G)}$ be a basis of $H_1(G,\mathbb{Z})$. Then, we obtain an identification of the lattice $H_1(G,\mathbb{R})\oplus \mathbb{R}^{|w|}$ with $\mathbb{R}^g$. Hence, we may express $Q_\Gamma$ as a positive semidefinite $g\times g$ matrix, called the \emph{period matrix} of $\Gamma$. Choosing a different basis gives another matrix related by an action of $GL_g(\mathbb{Z})$.
\end{definition}

To find the period matrix, first fix an arbitrary orientation of the edges of $G$. We then pick a spanning tree $T$ of $G$. Label the edges such that $e_1,\ldots, e_{g(G)}$ are not in $T$, and $e_{g(G)+1},\ldots, e_m$ are in $T$, where $m=|E(G)|$. Then $T\cup\{e_i\}$, for $1\leq i\leq g(G)$, contains a unique cycle $\omega_i$ of $G$. The cycles $\omega_1,\ldots,\omega_{g(G)}$ form a cycle basis of $G$.

We traverse each cycle $\omega_i$ according to the direction specified by $e_i$. We compute a row vector $b_i$ of length $m$, representing the direction of edges of $G$ in this traversal. For each edge $e_j$ in $E(G)$, let the $j$-th entry of $b_i$ be $1$ if $e_j$ is in the correct orientation in the cycle, $-1$ if it is in the wrong orientation, and 0 if it is not in the cycle. Let $B$ be the $g(G)\times m$ matrix whose $i$-th row is $b_i$. The matrix $B$ has an interpretation in matroid theory as a totally unimodular matrix representing the cographic matroid of $G$ \cite{oxley}.

Suppose that all vertices have weight zero, such that $g(G) = g$. Let $D$ be the $m\times m$ diagonal matrix with entries $l(e_1),\ldots,l(e_m)$. Then the period matrix is given by $Q_\Gamma = B D B^T$. If we label the columns of $B$ by $v_1,\ldots,v_m$, the period matrix equals 
\begin{align} \label{eqn-period-matrix}
Q_\Gamma = l(e_1) v_1v_1^T + \cdots + l(e_m)v_mv_m^T\,.
\end{align}

Thus the cone of all matrices that are period matrices of $G$, allowing the edge lengths to vary, is the rational open polyhedral cone
\begin{align}\label{cone}
\sigma_{G} = \mathbb{R}_{>0}\langle v_1v_1^T,\ldots, v_mv_m^T\rangle\,.
\end{align}

If $\Gamma$ has vertices of nonzero weight, the period matrix is given by the construction above with $g-g(G)$ additional rows and columns with zero entries.

\begin{example}
\label{completegraph}
Consider the complete graph on 4 vertices in Figure \ref{completefig}. 

\begin{figure}[h]
\begin{center}
\begin{tikzpicture}
\begin{scope}[thick, decoration={markings, mark=at position 0.5 with {\arrow{latex}}}]
\draw[postaction={decorate}] (2,2.5) --(0,0);
\draw[postaction={decorate}] (0,0) -- (4,0);
\draw[postaction={decorate}] (4,0) -- (2,2.5);
\draw[postaction={decorate}] (0,0) -- (2,0.9);
\draw[postaction={decorate}] (2,0.9) -- (2,2.5);
\draw[postaction={decorate}] (4,0) -- (2,0.9);
\filldraw (0,0) circle (1.5pt) (2,2.5) circle (1pt) (2,0.9) circle (1pt) (4,0) circle (1pt);
\draw (1,1.6) node{2};
\draw (0.7,1.3) node[red]{$e_1$};
\draw (2.3,1.5) node{13};
\draw (1.7, 1.5) node[red]{$e_2$};
\draw (3,1.6) node{3};
\draw (3.3, 1.3) node[red]{$e_6$};
\draw (1.2,0.3) node{7};
\draw (1.3,0.8) node[red]{$e_3$};
\draw (2.8,0.3) node{11};
\draw (2.9,0.7) node[red]{$e_4$};
\draw (2,-0.2) node{5};
\draw (2, 0.2) node[red]{$e_5$};
\end{scope}
\end{tikzpicture}
\caption{The metric graph and edge orientation used in Example \ref{completegraph}.}
\label{completefig}
\end{center}
\end{figure}
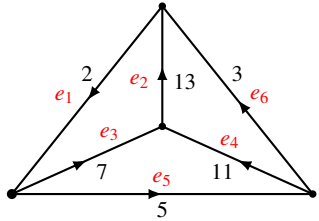

We indicate in the figure an arbitrary choice of the edge orientations, and we choose the spanning tree consisting of the edges $T = \{e_2,e_3,e_4\}$. This corresponds to the cycle basis
$\omega_1 = e_1 + e_3 + e_2 $, 
$\omega_2 = -e_3 + e_5 + e_4$, and 
$\omega_3 = -e_2 - e_4 + e_6$.
Next, we compute the matrix $B$ as
\begin{align}
B = 
\begin{pmatrix}
1 & 1 & 0 & 1 & 0 & 0\\
0 & -1 & 1 & 0 & 1 & 0\\
-1 & 0 & -1 & 0 & 0 & 1
\end{pmatrix}\,.
\end{align}
Let $D$ be the $6\times 6$ diagonal matrix with entries $13,7,11,2,5,3$ along the diagonal. The period matrix is then
\begin{align} \label{ex-period-mat}
Q_\Gamma = B D B^T =
\begin{pmatrix} 22 & -7 & -13 \\ -7 & 23 & -11 \\ -13 & -11 & 27\end{pmatrix}\,.
\end{align}

\end{example}

In the next section, we will use period matrices to define and study tropical Jacobians of curves as principally polarized tropical abelian varieties.

\section{Tropical Jacobians}
\label{jacobian}

Let $\tilde{S}^g_{\geq 0}$ be the set of $g\times g$ symmetric positive semidefinite matrices with rational nullspace, meaning that their kernels have bases defined over $\mathbb{Q}$. The group $GL_g(\mathbb{Z})$ acts on $\tilde{S}^g_{\geq 0}$ by $Q\cdot X = X^T Q X$ for all $X \in GL_g(\mathbb{Z})$, $Q\in \tilde{S}^g_{\geq 0}$.

We define a \emph{tropical torus} of dimension $g$ as a quotient $X=\mathbb{R}^g/ \Lambda$, where $\Lambda$ is a lattice of rank $g$ in $\mathbb{R}^g$. A \emph{polarization} on $X$ is given by a quadratic form $Q$ on $\mathbb{R}^g$. Following \cite{FMS, chan12}, we call the pair $(\mathbb{R}^g/\Lambda, Q)$ a \emph{principally polarized tropical abelian variety (pptav)}, when $Q \in \tilde{S}^g_{\geq 0}$.

Two pptavs are isomorphic if there is some $X\in GL_g(\mathbb{R})$ that maps one lattice to the other, and acts on one quadratic form to give the other. We can choose a representative of each isomorphism class in the form $(\mathbb{R}^g/\mathbb{Z}^g, Q)$, where $Q$ is an element of the quotient of $\tilde{S}^g_{\geq 0}$ by the action of $GL_g(\mathbb{Z})$. The points of this space are in bijection with the points of the \emph{moduli space of principally polarized tropical abelian varieties}, which we denote by $A_g^{\tr}$. We will describe the structure of $A_g^{\tr}$ in Section \ref{section-schottky}.

\begin{definition}
The \emph{tropical Jacobian} of a curve is $(\mathbb{R}^g/\mathbb{Z}^g, Q)$, where $Q$ is the period matrix of the curve. 
\end{definition}
The tropical Jacobian is our primary example of a principally polarized tropical abelian variety. The period matrix induces a Delaunay subdivision of $\mathbb{R}^g$, which has an associated Voronoi decomposition giving the \emph{tropical theta divisior} of the tropical Jacobian. We describe this in more detail below.

Given $Q \in \tilde{S}^g_{\geq 0}$, consider the map
\begin{equation}
l _Q: \ZZ ^g \longrightarrow \ZZ ^g \times \RR ,\ \ \  x \mapsto (x, x^T Q x)\,. 
\end{equation}
Take the convex hull of image of $l_Q$ in $\RR ^{g}\times \RR \cong \RR^{g+1}$. By projecting down the lower faces via the morphism $\RR^{g+1}\longrightarrow \RR ^g$ which forgets the last coordinate, we obtain a periodic dicing of the lattice $\ZZ ^g\subset \RR ^g$, called the \emph{Delaunay subdivision} $\Del(Q)$ of $Q$. This operation corresponds, naively speaking, to looking at the polyhedron from below and recording on the lattice only the faces that we see. This is an infinite and periodic analogue of taking the regular subdivision of a polytope induced by weights on the vertices.

\begin{remark} There is in the literature much discordance about the
  spelling of the name Delaunay. The source of such discordance lies
  in the fact that Boris Nicolaevich Delaunay was a Russian
  mathematician, hence the correct transliteration of his last name
  from Cyrillic is much debated (since he himself used the two
  versions ``Delaunay" and ``Delone" when he signed his papers). One
  must notice, however, that the name itself is of French origin:
  indeed, Boris Delaunay got his last name from the French Army
  officer De Launay, who was captured in Russia during Napoleon's
  invasion of 1812 and, after marrying a Russian noblewoman, settled
  down in Russia (see \cite{Roz}). Hence we decided to use the French
  transliteration, it being closer to the original name.
\end{remark}

\begin{example} 
\label{ex9}
Consider the matrix $\begin{bmatrix}1&0\\0&0\end{bmatrix}$. The function $l_Q: \ZZ ^2 \rightarrow \ZZ^2 \times \RR$ is given by $(x,y) \mapsto (x,y,x^2)$. If we now take the convex hull of the points in the image of $\l_Q$, we obtain the following picture on the left, together with the Delaunay subdivision on the right.

\begin{figure}
\begin{center}
\includegraphics[height = 1.7in]{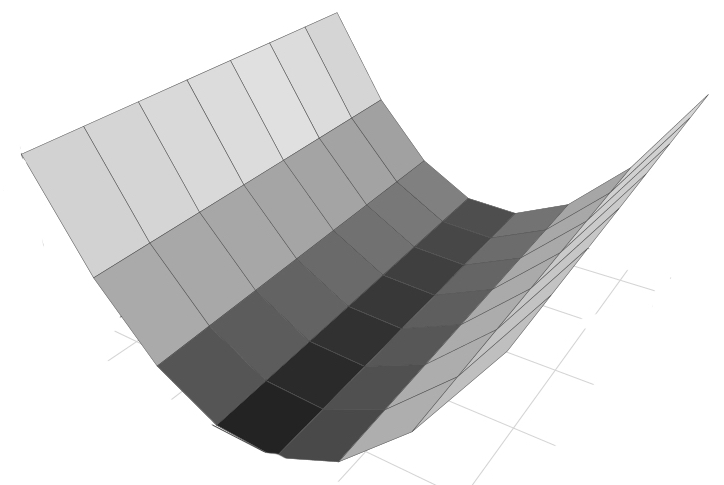}
\includegraphics[height = 1.8 in]{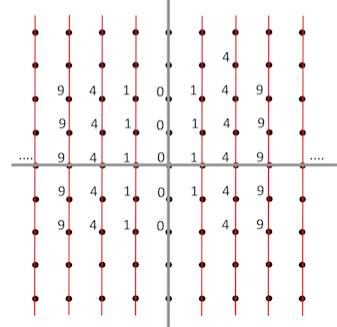}
\caption{The weight function induced by the quadratic form in Example \ref{ex9} on the left, and the corresponding Delaunay subdivision on the right.}
\end{center}
\end{figure}
\end{example}

Given a Delaunay decomposition, one can consider the dual decomposition, called the {\it Voronoi decomposition}. This is illustrated in Figure \ref{fig-voronoi-r2} for $g=2$.
\begin{figure}
\begin{center}
\includegraphics[height = 1.50 in]{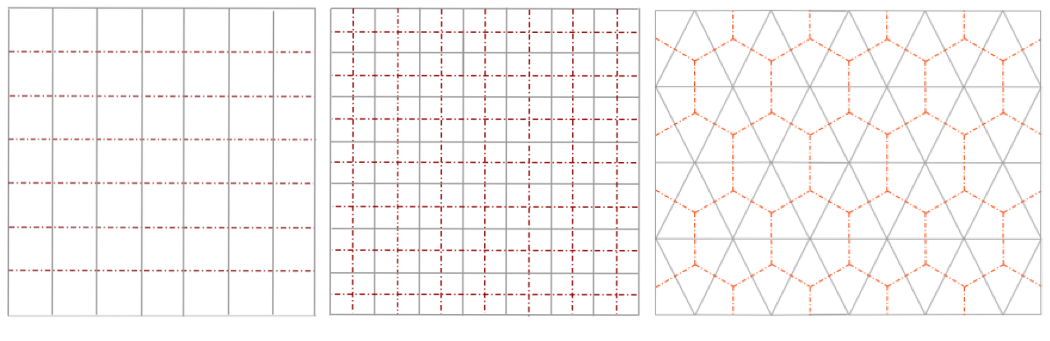}
\caption{Delaunay decompositions of $\mathbb{R}^2$ (solid lines) and their associated Voronoi decompositions (dotted lines).}
\label{fig-voronoi-r2}
\end{center}
\end{figure}
The Voronoi decomposition corresponds to the \emph{tropical theta divisor} associated to a pptav $(\mathbb{R}^g/\Lambda, Q)$, which is the tropical hypersurface in $\mathbb{R}^g$ defined by the \emph{theta function} 
\begin{align}
\Theta(x) = \max_{\lambda \in \Lambda} \left\{ \lambda^T Q x - \frac{1}{2}\lambda^T Q \lambda\right\}\,,\hspace{5mm} x\in\mathbb{R}^g\,.
\end{align}

It is possible to give a more manageable description of the tropical theta divisor in virtue of Theorem \ref{MZ}, as we are about to explain. 
Let~$\Gamma=(G,w,l)$ be a \tropcurve, and let $p_0\in \Gamma$ be a fixed basepoint. Let $\omega_1, \ldots, \omega_g$ be a basis of $H_1(G,\mathbb{Z})$. For any point $p$ in $\Gamma$, let $c(p)= \sum_i a_i e_i$ describe any path from $p_0$ to $p$. Then, take the inner product (defined in Equation \ref{innerproduct}) of $c(p)$ with each element of the cycle basis to obtain a point of $\mathbb{R}^g / \Lambda$, given by
\begin{equation}
(\langle c(p), \omega_1 \rangle, \ldots, \langle c(p), \omega_g \rangle).
\end{equation}
This does not depend on the choice of path from $p_0$ to $p$.
By the identification with $\mathbb{R}^g / \mathbb{Z}^g$ induced by the choice of cycle basis, this defines a point $\mu(p)$ of the tropical Jacobian. We may extend this map linearly so that it is defined on all divisors on $\Gamma$. By a \emph{divisor} on $\Gamma$, we mean a finite formal integer linear combination of points in $\Gamma$. Then the map $\mu$ is called the \emph{tropical Abel-Jacobi map} \cite{mz}. 

Given a divisor $D = \sum_ia_ip_i$, where $a_i \in \mathbb{Z}$ and $p_i \in \Gamma$, define the \emph{degree} of $D$ as $\sum_ia_i$. We say that $D$ is \emph{effective} if $a_i\geq 0$ for all $i$. Let $W_{g-1}$ be the image of degree $g-1$ effective divisors under the tropical Abel-Jacobi map.

\begin{theorem}[Corollary 8.6, \cite{mz}] \label{MZ}
The set $W_{g-1} $ is the tropical theta divisor up to translation.
\label{mzthm}
\end{theorem}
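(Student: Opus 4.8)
The plan is to deduce the statement from the tropical Abel theorem and tropical Riemann--Roch, by matching the \emph{complements} of the two sides inside $J:=\mathbb{R}^g/\Lambda$. After a harmless reduction one may assume all vertex weights of $\Gamma$ vanish, so that $g=g(G)$. First I would unwind the right-hand side. Writing $\Theta(x)=\max_{\lambda\in\Lambda}\{\lambda^{T}Qx-\tfrac12\lambda^{T}Q\lambda\}$, a given $\lambda$ achieves the maximum at $x$ exactly when $\nu^{T}Q(x-\lambda)\le\tfrac12\nu^{T}Q\nu$ for every $\nu\in\Lambda$, i.e. when $x\in\lambda+V$, where $V=\{y:\nu^{T}Qy\le\tfrac12\nu^{T}Q\nu\ \text{for all}\ \nu\in\Lambda\}$ is the Voronoi cell of the origin. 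Hence the corner locus of $\Theta$ is the $(g-1)$-skeleton of the Voronoi decomposition dual to $\Del(Q)$, and since the Voronoi cells tile $\mathbb{R}^g$ under $\Lambda$ and the interior $V^{\circ}$ injects into $J$, the complement of the theta divisor in $J$ is the single connected open set $\pi(V^{\circ})$, where $\pi\colon\mathbb{R}^g\to J$ is the quotient. It therefore suffices to produce a point $\kappa\in J$ (the tropical Riemann constant) with $J\setminus W_{g-1}=\kappa+\pi(V^{\circ})$.

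Next I would rephrase the left-hand side in terms of rank. Fixing a basepoint $p_0$ and a degree-$(g-1)$ reference divisor identifies $\operatorname{Pic}^{g-1}(\Gamma)$ with $J$ through the tropical Abel theorem, which asserts that the Abel--Jacobi map induces an isomorphism $\operatorname{Pic}^{0}(\Gamma)\xrightarrow{\ \sim\ }J$. Since $\operatorname{Sym}^{g-1}\Gamma$ is compact and $\mu$ is continuous and piecewise integral-affine, $W_{g-1}=\mu(\operatorname{Sym}^{g-1}\Gamma)$ is a finite polyhedral subcomplex of $J$; by definition it is the set of degree-$(g-1)$ classes $[D]$ with $r(D)\ge 0$, so, as $r\ge-1$ always, its complement is precisely the locus $\{[D]:\deg D=g-1,\ r(D)=-1\}$. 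The theorem thus reduces to the claim that, after the translation by $\kappa$, this rank-$(-1)$ locus is the open Voronoi cell $\pi(V^{\circ})$.

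The heart of the proof is to match these two decompositions of $J$ cell by cell, and for this I would use canonical representatives. Every class in $\operatorname{Pic}^{g-1}(\Gamma)$ has a unique $p_0$-reduced representative, and by Dhar's burning algorithm $[D]$ has rank $-1$ exactly when that representative has a negative coefficient at $p_0$. Organizing the $p_0$-reduced divisors of degree $g-1$ by the combinatorial type of their support --- the degree-$(g-1)$ analogue of the break-divisor tiling of $\operatorname{Pic}^{g}(\Gamma)$, whose cells are indexed by the spanning trees of $G$ --- endows $\operatorname{Pic}^{g-1}(\Gamma)$ with a polyhedral decomposition. I would then verify, using the presentation $Q=BDB^{T}$ and the cycle-basis description of $H_1(G,\mathbb{Z})$ from Section~\ref{periodmatrix}, that under the identification $\operatorname{Pic}^{g-1}(\Gamma)\cong J$ this decomposition is the Voronoi decomposition dual to $\Del(Q)$, translated by $\kappa$; this is, in essence, the matrix--tree theorem for tropical Jacobians. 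Granting this, the classes with effective reduced representative occupy exactly the $\kappa$-translate of $\partial V$ --- the theta divisor --- while those with negative $p_0$-coefficient occupy the open cell $\kappa+\pi(V^{\circ})$; together with the characterization of $W_{g-1}$ as the $r\ge0$ locus and tropical Riemann--Roch, this yields $W_{g-1}=\Theta+\kappa$.

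I expect the real obstacle to be exactly this matching: proving that the polyhedral subdivision of $\operatorname{Pic}^{g-1}(\Gamma)$ cut out by reduced divisors is affinely the Voronoi/Delaunay subdivision attached to the period form $Q$. Making this precise forces one to track how a path in $\Gamma$ from $p_0$ to a point $p$, its image $\mu(p)$, and the lattice points of $\Lambda$ competing in $\Theta$ correspond to one another, and it is the place where the combinatorics of spanning trees, the totally unimodular matrix $B$, and the edge lengths $l(e)$ all have to be reconciled; granting the tropical Abel and Riemann--Roch theorems, the rest is bookkeeping.
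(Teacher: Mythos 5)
The first thing to say is that the paper contains no proof of this statement at all: it is imported verbatim as Corollary 8.6 of \cite{mz}, so there is no internal argument to compare your proposal against, and I can only judge it against the cited source and on its own merits. The outer layers of your plan are fine. For $Q$ positive definite, a lattice vector $\lambda$ attains the maximum defining $\Theta$ exactly on $\lambda+V$, so the corner locus is the $(g-1)$-skeleton of the Voronoi decomposition and the complement of the theta divisor in $J$ is the single open cell $\pi(V^{\circ})$; and the complement of $W_{g-1}$ is by definition the locus of degree-$(g-1)$ classes of rank $-1$. (The ``harmless'' reduction to weight zero deserves a word, since for $|w|>0$ the form is only semidefinite and both sides degenerate along the $\mathbb{R}^{|w|}$ factor, but this is a side issue.)

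The genuine gap is exactly the step you defer: the claimed identification of the stratification of $\operatorname{Pic}^{g-1}(\Gamma)$ by combinatorial types of $p_0$-reduced representatives with the translated Voronoi decomposition. As stated this is not merely unproved, it is false: the reduced-divisor stratification is in general strictly finer than the Voronoi decomposition. Already for the theta graph ($g=2$) the rank-$(-1)$ locus is a single open Voronoi two-cell, while the reduced representatives $q_1+q_2-p_0$ break it into several two-dimensional strata according to which edges carry $q_1$ and $q_2$; so no cell-by-cell matching exists, and what you actually need is the coarser statement that the \emph{union} of the strata with negative coefficient at $p_0$ is one translated open Voronoi cell --- which is precisely the theorem. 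In other words the argument is circular at its core, and ``matrix-tree bookkeeping'' cannot supply the missing content: Kirchhoff-type identities for $Q=BDB^{T}$ give at best an equality of volumes, not an identification of cells. For the record, the proof in \cite{mz} does not pass through Dhar's algorithm or reduced divisors at all; it works directly with the theta function and the tropical Abel--Jacobi map, in the spirit of Riemann's classical argument, extracting the translation vector and both inclusions from an analysis of $\Theta$ along the image of the curve. If you want to keep a combinatorial route, the dictionary you need is between faces of the Delaunay/Voronoi decomposition attached to $H_1(G,\mathbb{R})$ and (partial) orientations of $G$, combined with the characterization of rank-$(-1)$ classes of degree $g-1$ in terms of orientations; setting that up is a substantial argument in its own right, not bookkeeping.
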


\begin{example}[Example \ref{completegraph}, Continued]
Delaunay subdivisions also arise in many other branches of mathematics, for example in lattice packing or covering problems. In this context, Sikiri\'c wrote a \emph{GAP} \cite{GAP4} software package \emph{polyhedral} \cite{gap}. Using this package, we compute that the Delaunay subdivision of the quadratic form in Equation \ref{ex-period-mat} is given by six tetrahedra in the unit cube, all of which share the great diagonal as an edge. We also compute using \emph{polyhedral} the Voronoi decomposition dual to this Delaunay subdivision, which gives a tiling of $\mathbb{R}^3$ by permutohedra as illustrated in Figure \ref{figure-tiling-permutohedra}. This is the tropical theta divisor, with $f$-vector $(6,12,7)$. In Figure \ref{permutohedron}, we illustrate the correspondence described by Theorem \ref{mzthm} between $W_2$ and the tropical theta divisor.

\begin{figure}
\begin{center}
\includegraphics[height = 3.5cm]{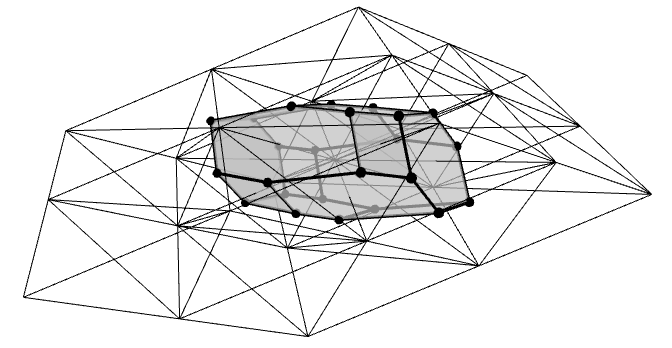}
\includegraphics[height = 3cm]{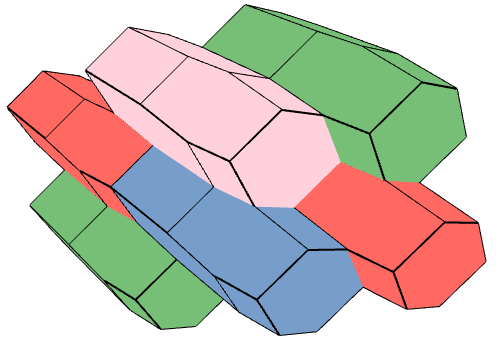}
\caption{The left figure shows the Delaunay subdivision by tetrahedra and a dual permutohedron in grey. The right figure illustrates a tiling of $\mathbb{R}^3$ by permutohedra. The polytopes were computed using the \emph{polyhedral} package of \emph{GAP} \cite{gap} and the figures were created using \emph{polymake} \cite{polymake}.}
\label{figure-tiling-permutohedra}
\end{center}
\end{figure}

\begin{figure}
\begin{center}
\includegraphics[height =4 in]{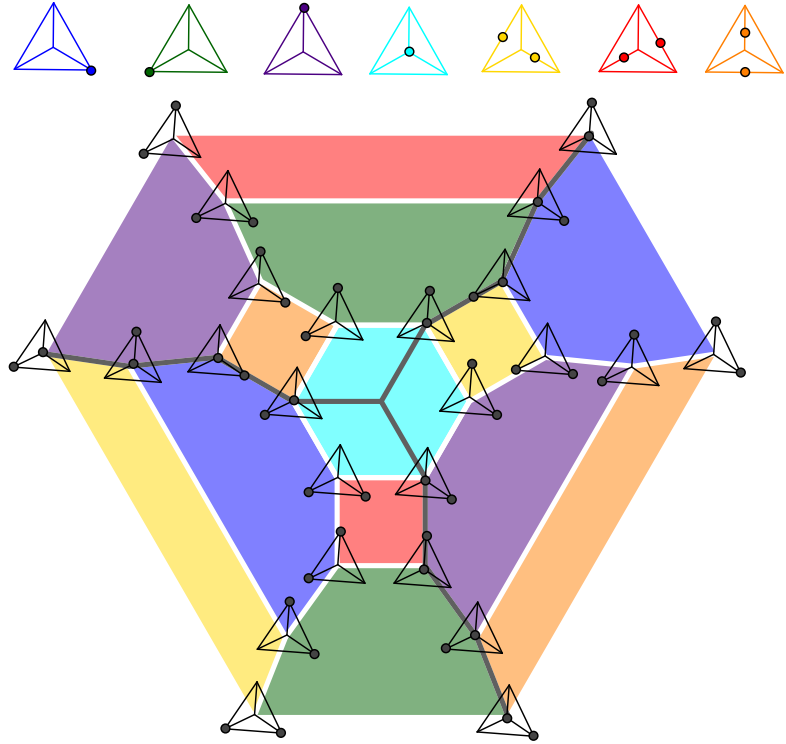}
\caption{Each vertex of the permutohedron corresponds to a divisor supported on the vertices of $\Gamma$. The square faces correspond to divisors supported on the interiors of edges of $\Gamma$ which do not meet in a vertex. Each hexagonal face corresponds to divisors which are supported on edges of $\Gamma$ which are adjacent to a fixed vertex. Then, the edges correspond to keeping one point of the divisor fixed, and moving the other point along an edge of $\Gamma$. The grey curve depicted above represents the embedding of $\Gamma$ into its Jacobian under the Abel-Jacobi map, which, under the identifications, is again $K_4$.}
\label{permutohedron}
\end{center}
\end{figure}
\end{example}

\section{Tropical Schottky Problem}
\label{section-schottky}
We describe the structure of the moduli space $A_g^{\tr}$ in this section, using Voronoi reduction theory. Given a Delaunay subdivision $D$, define the set of matrices that have $D$ as their Delaunay subdivision to be
\begin{align} \label{def-sec-cone}
\sigma_D = \{Q\in \tilde{S}^g_{\geq 0}\,|\, \Del(Q) = D\}\,.
\end{align}

The \emph{secondary cone} of $D$ is the Euclidean closure $\overline{\sigma_D}$ of $\sigma_D$ in $\mathbb{R}^{\binom{g+1}{2}}$, and is a closed rational polyhedral cone. There is an action of $GL_g(\mathbb{Z})$  on the set of secondary cones, induced by its action on $\tilde{S}^g_{\geq 0}$.

\begin{theorem}[\cite{Voronoi1908}]
The set of secondary cones forms an infinite polyhedral fan whose support is $\tilde{S}^g_{\geq 0}$, known as the \emph{second Voronoi decomposition}. There are only finitely many $GL_g(\mathbb{Z})$-orbits of this set of secondary cones.
\end{theorem}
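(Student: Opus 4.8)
The plan is to verify the two claims separately: first that the secondary cones $\overline{\sigma_D}$ form a polyhedral fan with support $\tilde S^g_{\geq 0}$, and then that this fan has only finitely many $GL_g(\mathbb{Z})$-orbits of cones.

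For the fan structure, the starting point is to describe each $\sigma_D$ by \emph{empty-sphere inequalities}. Fix a Delaunay subdivision $D$. A form $Q$ has $\Del(Q)=D$ precisely when, for every maximal cell $P$ of $D$ (a lattice polytope with vertices $V(P)\subset\mathbb{Z}^g$) and every lattice point $u$ lying just across a facet of $P$, the lifted point $l_Q(u)=(u,u^TQu)$ lies strictly above the affine hyperplane spanned by $\{l_Q(v):v\in V(P)\}$; allowing equality gives the Euclidean closure $\overline{\sigma_D}$. Expanding $u^TQu$ shows each such condition is linear in the entries of $Q$ with integer coefficients, and by periodicity of $D$ only finitely many pairs $(P,u)$ occur up to lattice translation, so $\overline{\sigma_D}$ is a rational polyhedral cone. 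On the non-definite locus one first uses the rational nullspace to change basis by an element of $GL_g(\mathbb{Z})$ so that $Q$ splits as a positive definite form on $\mathbb{Z}^r$ plus zero, whereupon $\Del(Q)$ becomes the ``prism'' subdivision over the rank-$r$ Delaunay subdivision; this also gives $\bigcup_D\sigma_D=\tilde S^g_{\geq 0}$. That the closures overlap only along common faces follows by watching an empty-sphere inequality become an equality as $Q$ leaves the relative interior of $\sigma_D$: this is exactly the statement that lower faces of the lifted hull merge, i.e. $\Del(Q)$ gets \emph{coarser}. Hence each face of $\overline{\sigma_D}$ equals $\overline{\sigma_{D'}}$ for a coarsening $D'$, and $\overline{\sigma_D}\cap\overline{\sigma_{D'}}=\overline{\sigma_{D''}}$ for the common coarsening $D''$, which is the fan axiom. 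Local finiteness is then immediate: near a fixed positive definite $Q_0$ the forms are uniformly bounded above and below, so all Delaunay polytopes of nearby forms stay in a bounded region and only finitely many combinatorial types, hence finitely many secondary cones, meet a neighbourhood of $Q_0$.

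For the finiteness of $GL_g(\mathbb{Z})$-orbits, the splitting above lets us argue by induction on $g$: secondary cones contained in the boundary strata are accounted for by the lower-genus fans, so it suffices to bound the number of orbits among cones meeting $S^g_{>0}$, and for those it is enough to bound the number of secondary cones meeting Minkowski's reduction domain $\mathcal{M}\subset S^g_{>0}$, since every such orbit has a representative meeting $\mathcal{M}$. The substantive input is Voronoi's quantitative estimate: if $Q\in\mathcal{M}$ then every Delaunay polytope of $Q$ is $\mathbb{Z}^g$-lattice-equivalent, after translation, to one contained in a ball whose radius depends only on $g$ --- the diameter of a Delaunay polytope is controlled by the successive minima of $Q$, which for a Minkowski-reduced form are comparable to the diagonal entries, and after rescaling the directions with large diagonal entries correspond to approaching the boundary, where the induction applies. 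I expect this to be the main obstacle: proving a bound on the Delaunay polytopes of a reduced form that is uniform in $g$, and organizing the estimate so that the noncompact ``cuspidal'' directions of $\mathcal{M}$ are absorbed into the lower-genus strata. Once this is in place, only finitely many combinatorial types of Delaunay subdivision arise from $\mathcal{M}$, and the theorem follows; everything else is essentially formal once the empty-sphere inequalities are in hand.
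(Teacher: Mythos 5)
The paper itself offers no proof of this theorem: it is quoted from Voronoi's 1908 memoir (complete modern treatments are in the references \cite{schuermann, sgsw} that the paper cites for computational purposes), so your argument has to stand on its own.

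Your first half, the fan structure, is essentially the standard argument and is fine in outline, though two details need care: for a non-simplicial cell $P$ the description of $\sigma_D$ requires linear \emph{equalities} (the lifted vertices of $P$ must be affinely coplanar before ``the hyperplane they span'' makes sense) in addition to the strict inequalities for outside lattice points; and restricting the inequalities to lattice points ``just across a facet'' is a local-to-global Delaunay criterion that needs its own (standard) justification, as does the identification of faces of $\overline{\sigma_D}$ with coarsenings. These are fixable with known arguments.

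The genuine gap is in the finiteness half, and you have in effect flagged it yourself without closing it. Everything reduces to the lemma that for a Minkowski-reduced positive definite $Q$ every Delaunay polytope is, up to translation, contained in a region of $\mathbb{Z}^g$ of size depending only on $g$; that lemma \emph{is} the substantive content of Voronoi's theorem, and your sketch of it does not work. The covering-radius/successive-minima estimate gives Euclidean control of Delaunay cells, but converting that into a bound on lattice coordinates costs roughly a factor $\sqrt{q_{gg}/q_{11}}$, which is unbounded in the cuspidal directions of the reduction domain. Your proposed remedy, that these directions are ``absorbed into the lower-genus strata by induction,'' conflates forms \emph{near} the boundary of the positive definite cone with forms \emph{on} it: a reduced definite form with $q_{gg}/q_{11}$ enormous is still definite, its secondary cone is a full-dimensional cone of the fan and is not contained in any rank-deficient stratum, so the induction on $g$ (which only handles cones of forms with nontrivial rational kernel) never reaches these cones. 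Until the bounded-Delaunay-polytope lemma for reduced forms is actually proved -- or replaced by some other reduction-theoretic input, as in Voronoi's original memoir or in \cite{schuermann} -- the claim that only finitely many secondary cones meet the Minkowski domain, and hence the finiteness of $GL_g(\mathbb{Z})$-orbits, remains unestablished.
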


By this theorem, we can choose Delaunay subdivisions $D_1,\ldots, D_k$ of $\mathbb{R}^g$, such that the corresponding secondary cones are representatives for $GL_g(\mathbb{Z})$-equivalence classes of secondary cones. The moduli space $A_g^{\tr}$ is a \emph{stacky fan} whose cells correspond to these classes \cite{chan12, FMS}. More precisely, for each Delaunay subdivision $D$, consider the stabilizer
\begin{align}
\Stab(\sigma_D) = \{ X \in GL_g(\mathbb{Z})\,|\, \sigma_D \cdot X = \sigma_D\}\,.
\end{align}
Define the cell
\begin{align}
C(D) = \overline{\sigma_D} / \Stab(\sigma_D)
\end{align}
as the quotient of the secondary cone by the stabilizer. Then we have
\begin{align}
A_g^{\tr} = \bigsqcup_{i=1}^k C(D_i) / \sim,
\end{align}
where we take the disjoint union of the cells $C(D_1),\ldots, C(D_k)$ and quotient by the equivalence relation $\sim$ induced by $GL_g(\mathbb{Z})$-equivalence of matrices in $\tilde{S}^g_{\geq 0}$, which corresponds to gluing the cones.

\begin{example}
In genus two, we can choose the Delaunay subdivisions $D_1,\ldots, D_4$ as shown in Figure \ref{Delaunay2}. These have the property that their secondary cones give representatives for $GL_g(\mathbb{Z})$-equivalence classes of secondary cones.

\begin{figure}[h]
\begin{center}
\includegraphics[height = 0.9 in]{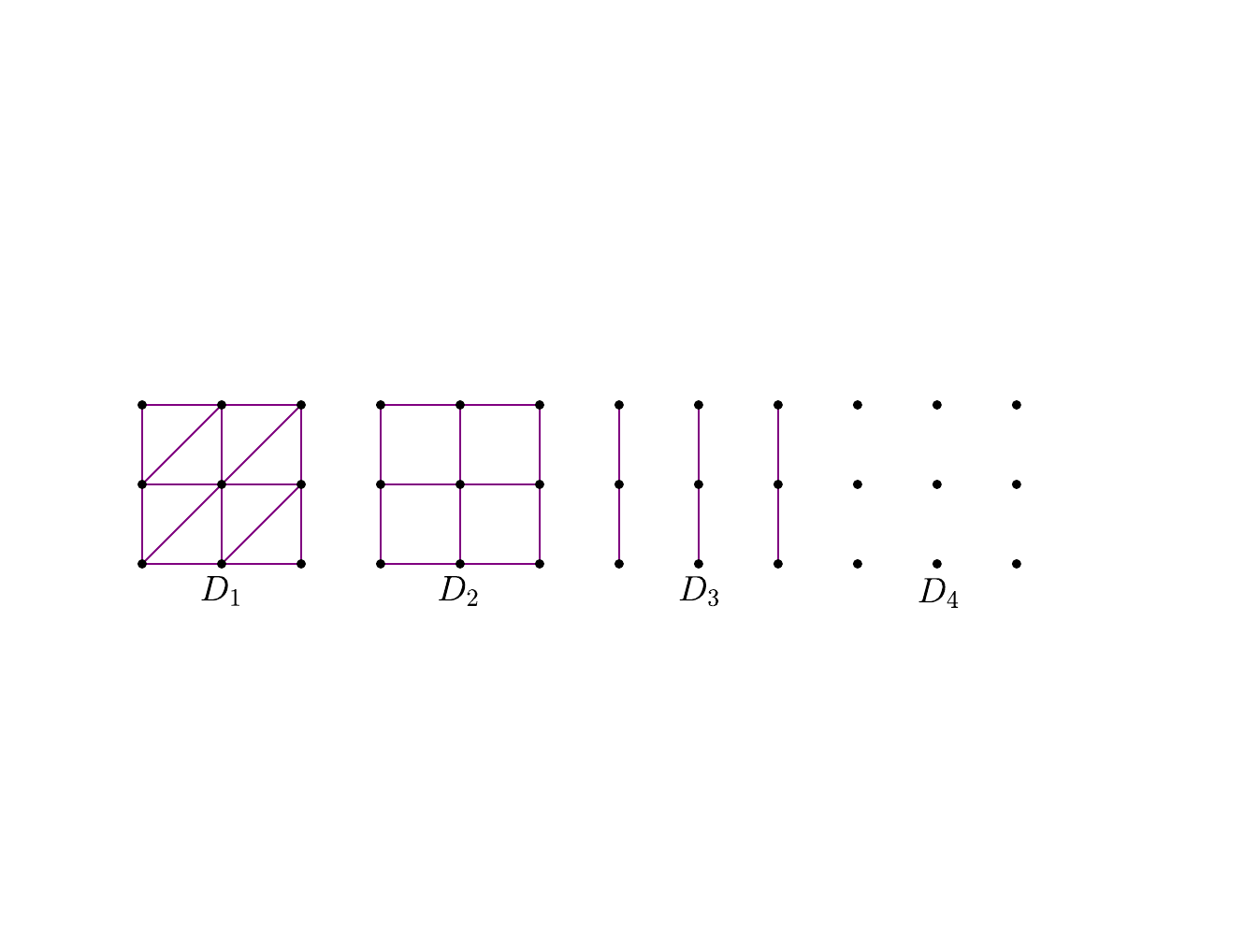}
\caption{Delaunay subdivisions for $g = 2$.}
\label{Delaunay2}
\end{center}
\end{figure}

The corresponding secondary cones are as follows.

\begin{align}
\overline{\sigma_{D_1}} &= \left\{\begin{bmatrix}a+c&-c\\-c&b+c\end{bmatrix}\,:\, a, b, c \in\mathbb{R} \right\}\,, \\
\overline{\sigma_{D_2}} &= \left\{\begin{bmatrix}a&0\\0&b\end{bmatrix}\,:\, a, b \in\mathbb{R} \right\}\,, \\
\overline{\sigma_{D_3}} &= \left\{\begin{bmatrix}a&0\\0&0\end{bmatrix}\,:\, a \in\mathbb{R} \right\}\,, \\
\overline{\sigma_{D_4}} &= \left\{\begin{bmatrix}0&0\\0&0\end{bmatrix}\right\}\,.
\end{align}
\end{example}

The \emph{tropical Torelli map} $t_g^{\tr} : M_g^{\tr} \rightarrow A_g^{\tr}$ sends a \tropcurve of genus $g$ to its tropical Jacobian, which is the element of $A_g^{\tr}$ corresponding to its period matrix. The image of this map is called the \emph{tropical Schottky locus}, which has a characterization using matroid theory. 

Given a graph $G$, we can define a \emph{cographic matroid} $M^*(G)$ (see \cite{oxley} for an introduction to matroid theory). $M^*(G)$ is representable by a totally unimodular matrix, constructed as the matrix $B$ in Section \ref{periodmatrix}. The cone $\sigma_G$ defined in Equation \ref{cone} is a secondary cone in $\tilde{S}^g_{\geq 0}$. The $GL_g(\mathbb{Z})$-equivalence class of $\sigma_{G}$ is independent of the choice of totally unimodular matrix representing $M^*(G)$. Hence we can associate to $M^*(G)$ a unique cell $C(M^*(G))$ of $A_g^{\tr}$, corresponding to this equivalence class of secondary cones. 

A matroid is \emph{simple} if it has no loops and no parallel elements. We define the following stacky subfan of $A_g^{\tr}$ corresponding to simple cographic matroids,
\begin{align}
  A_g^{\cogr} = \left\{ C(M)\, :\, M \mbox{ simple cographic matroid of rank } \leq g\right\}\,.
\end{align}
The image of the tropical Torelli map $t_g^{\tr}$ is $A_g^{\cogr}$ \cite{FMS, chan12}, and we call $A_g^{\cogr}$ the \emph{tropical Schottky locus}. When $g\leq 3$, $A_g^{\cogr} = A_g^{\tr}$, hence every element of $\tilde{S}^g_{\geq 0}$ is a period matrix of a \tropcurve. But when $g\geq 4$, this inclusion is proper. For example, $A_4^{\cogr}$ has 25 cells while $A_4^{\tr}$ has 61 cells, and $A_5^{\cogr}$ has 92 cells while $A_5^{\tr}$ has 179433 cells, according to the computations in \cite{chan12}.

From the classical perspective, the association $C\mapsto (J(C), \Theta)$, where $J(C)$ is the Jacobian of $C$ defined in the introduction, gives the \emph{Torelli map}

$$
t_g : \mathcal{M}_g \to \mathcal{A}_g\,.
$$

Here $ \mathcal{M}_g$ denotes the moduli space of smooth genus $g$ curves, while $\mathcal{A}_g$ denotes the moduli space of $g$-dimensional abelian varieties with a principal polarization. The content of Torelli's theorem is precisely the injectivity of the Torelli map, which in fact can be proved to be dominant for $g=2,3$. Its image, i.e. the locus inside the moduli space of principally polarized abelian varieties, is called the {\it Schottky locus} and its complete description required several decades of work by many. The injectivity of the Torelli morphism implies, in particular, that we can always reconstruct an algebraic curve from its principally polarized Jacobian.
The characterization of the Schottky locus was first worked out in genus 4 by Schottky himself and Jung (cf. \cite{SJ}) via theta characteristics. Many other approaches followed in history for higher genus: the work by Andreotti and Mayer (\cite{andreotti}), Matsusaka and Ran {\cite{matsusaka, ran}) and Shiota (\cite{shiota}) are worth mentioning. For extensive surveys see e.g. \cite{arbarelloschottky,gru}.

\

\section{...and Back}
\label{andback}

The process we have described so far produces the tropical Jacobian of a curve given its defining equations. Now, we discuss whether it is possible to take a principally polarized tropical abelian variety X in the tropical Schottky locus, and produce a curve whose tropical Jacobian is precisely X. We remark that several of the steps described in the previous sections are far from being one-to-one.
Indeed, many algebraic curves have the same abstract tropicalization; for example all curves with a smooth stable model tropicalize to a single weighted vertex.
 In the same fashion, the non-injectivity of the tropical Torelli map (see e.g. \cite{ FMS, chan12}) implies that the same positive semidefinite matrix can be associated to more than one weighted metric graph. The purpose of this section is therefore to construct an arbitrary curve with a given tropical Jacobian. 

\subsection{From Tropical Jacobians to Positive Semidefinite Matrices}

Let $(\RR ^g /\Lambda, \Theta)$ be a tropical Jacobian, and fix an isomorphism $\Lambda \cong \ZZ ^g$. The tropical theta divisor $\Theta$, as we remarked in Section \ref{jacobian}, is a Voronoi decomposition dual to a Delaunay subdivision $D$. 

We can describe $D$ by a collection of hyperplanes $\{H_1,\ldots, H_k\}$, such that the lattice translates by $\mathbb{Z}^g$ of these hyperplanes cut out the polytopes in $D$. Following \cite[Fact 4.1.4]{meloviviani} and \cite{ER}, we can choose these hyperplanes with normal vectors $v_1,\ldots,v_k \in\mathbb{R}^g$ such that the matrix with $v_1,\ldots,v_k$ as its columns is simple unimodular. The secondary cone of $D$ is then 
\begin{align}
\sigma_D = \RR _{>0} \langle v_1v_1^T,\ldots,v_kv_k^T\rangle\,.
\end{align}
Thus any quadratic form lying in the positive span of the rank one forms $v_i v_i^T$, for $i=1,...,k$, will have Delaunay subdivision $D$. In particular, we can take
\begin{align}
Q = v_1v_1^T +\cdots+v_k v_k^T\,.
\end{align}

\subsection{From Positive Semidefinite Matrices to Weighted Metric Graphs}

Fix $g\geq 0$, and let $Q$ be a $g\times g$ matrix in $\tilde{S}^g_{\geq 0}$. If $Q$ is not positive definite, we can do a change of basis such that $Q$ has a $g'\times g'$ postive definite submatrix and remaining entries zero. This corresponds to adding $|w| = g-g'$ weights on the vertices of the graph, which can be done arbitrarily as long as every weight zero vertex has degree at least 3. Hence without loss of generality we assume that $Q$ is positive definite. Our goal is to first determine if $Q$ corresponds to an element of the tropical Schottky locus, and if so, to find a \tropcurve that has $Q$ as its period matrix. 

First consider all combinatorial types of simple graphs with genus
less than or equal to $g$. We compute their corresponding secondary
cones, as defined in Equation \ref{cone}. Let $S$ be the set of these
secondary cones. We compute the secondary cone $\sigma_Q :=
\sigma_{\Del(Q)}$ of $Q$, as defined in Equation \ref{def-sec-cone},
which can be done using \emph{polyhedral} \cite{gap}. The underlying
theory is described in \cite{schuermann}. We then check if
$\sigma_Q$ is $GL_g(\mathbb{Z})$-equivalent to any cone in $S$. This can be done using \emph{polyhedral} \cite{gap} with external calls to the program \emph{ISOM} by Plesken and Souvignier \cite{ps95, ps97}, see \cite[Section 4]{sgsw} for the implementation details. 

If $\sigma_Q$ is not equivalent to any cone in $S$, then $Q$ is not the period matrix of a \tropcurve. Otherwise, let $\sigma$ be a cone in $S$ that is in the same $GL_g(\mathbb{Z})$-equivalence class as $\sigma_Q$, and let $G$ be the graph from which we computed $\sigma$. Let $X\in GL_g(\mathbb{Z})$ such that $X$ maps $\sigma_Q$ to $\sigma$, then $X$ maps $Q$ to a matrix $Q'$ in $\sigma$.

From Equation \ref{cone}, we can write $\sigma_G =
\mathbb{R}_{>0}\langle v_1v_1^T,\ldots, v_m v_m^T\rangle$, and so $Q'
= \alpha_1 v_1v_1^T + \cdots + \alpha_m v_mv_m^T$ for some
$\alpha_1,\ldots,\alpha_m\in\mathbb{R}_{>0}$. Then $Q'$ is the period
matrix of $G$ with edge lengths $\alpha_1,\ldots,\alpha_m$, by
Equation \ref{eqn-period-matrix}. Since the transformation $X\in
GL_g(\mathbb{Z})$ corresponds to a different choice of cycle basis, we
have constructed a metric graph with $Q$ as its period matrix.

\begin{example}
Consider the positive definite matrix
\begin{align}
Q =\begin{pmatrix}
17 & 5 & 3 & 5\\
5 & 19 & 7 & 11\\
3 & 7 & 23 & 16 \\
5 & 11 & 16 & 29
\end{pmatrix}\,. \label{eqn-ex-trop-schottky}
\end{align}
Using \emph{polyhedral}, we
compute that $\sigma_Q$ is $GL_4(\mathbb{Z})$-equivalent to the cone
$\sigma\in S$ corresponding to the weighted metric graph in Figure
\ref{fig-recover-trop-schottky}, via the transformation
\begin{align}
X = \begin{pmatrix}
  0 & 0 & 0 & 1 \\
  1 & 0 & 0 & 0 \\
  0 & 1 & 1 & 0 \\
  -1 & -1 & 0 & 0
\end{pmatrix}\,,
\hspace{1.5cm}
Q' = X^TQX = \begin{pmatrix}
  26 & 9 & -9 & 0 \\
  9 & 20 & 7 & -2 \\
  -9 & 7 & 23 & 3 \\
  0 & -2 & 3 & 17
\end{pmatrix}\,. \label{eqn-transform-trop-schottky}
\end{align}

Hence $Q$ is in the tropical Schottky locus, and $Q'$ is the period
matrix of the metric graph in Figure \ref{fig-recover-trop-schottky},
with the cycle basis consisting of the cycles $e_2+e_6-e_3$,
$-e_1+e_2+e_7-e_4$, $-e_1+e_3+e_8-e_5$, and $e_4+e_9-e_5$. We compute
the edge lengths by expressing $Q'$ as a linear combination of the
extreme rays of $\sigma$.

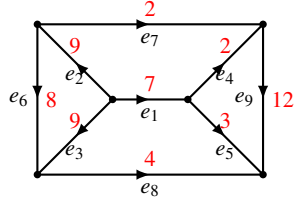
\begin{figure}[h]
\begin{center}
\begin{tikzpicture}
\begin{scope}[thick, decoration={markings, mark=at position 0.5 with {\arrow{latex}}}]
\filldraw (0,0) circle (1pt) (0,2) circle (1pt) (1,1) circle (1pt)
(2,1) circle (1pt) (3,0) circle (1pt) (3,2) circle (1pt);
\draw[postaction={decorate}] (1,1)--node[above,red]{7}node[below]{$e_1$}(2,1);
\draw[postaction={decorate}] (1,1)--node[above,red]{9}node[below]{$e_2$}(0,2);
\draw[postaction={decorate}] (1,1)--node[above,red]{9}node[below]{$e_3$}(0,0);
\draw[postaction={decorate}] (2,1)--node[above,red]{2}node[below]{ $e_4$}(3,2);
\draw[postaction={decorate}] (2,1)--node[above,red]{3}node[below]{$e_5$}(3,0);
\draw[postaction={decorate}] (0,2)--node[right,red]{8}node[left]{$e_6$}(0,0);
\draw[postaction={decorate}] (0,2)--node[above,red]{2}node[below]{$e_7$}(3,2);
\draw[postaction={decorate}] (0,0)--node[above,red]{4}node[below]{$e_8$}(3,0);
\draw[postaction={decorate}] (3,2)--node[right,red]{12}node[left]{$e_9$}(3,0);
\end{scope}
\end{tikzpicture}
\caption{Weighted metric graph with Equation \ref{eqn-ex-trop-schottky} as a
  period matrix, with edge lengths in red.}
\label{fig-recover-trop-schottky}
\end{center}
\end{figure}
\end{example}

As of the time of writing, this algorithm is impractical for genus
greater than 5, as the classification of Delaunay subdivisions is
known only up to dimension 5 \cite{sgsw}. For more details about the
genus 4 case, refer to future work with M. Kummer and B. Sturmfels.

\subsection{From Weighted Metric Graphs to Algebraic Curves}

 Lastly, we wish to take a weighted metric graph $\Gamma$ and produce
 equations defining a curve which tropicalizes to $\Gamma$. Any
 weighted metric graph $\Gamma$ arises through tropicalization
 \cite[Theorem 1.2.1]{ACP}. Given a smooth curve $X$ with $\Gamma$ as
 its tropicalization, there exists a rational map $f : X \rightarrow
 \mathbb{P}^3$ such that the restriction of $\trop(f)$ to the skeleton
 $\Gamma$ is an isometry onto its image \cite[Theorem
   8.2]{jacobians}. Since $f$ is not required to be a closed
 immersion, however, this does not necessarily give a faithful
 tropicalization, see \cite[Remark 8.5]{jacobians}.

 The paper \cite{CFPU} also studies this question for a specific class
 of metric graphs. They give a method for producing curves over $\mathbb{C}((t^{1/l}))$, embedded in a toric scheme and with a faithful tropicalization to the input metric graph $\Gamma$. They start by defining a suitable nodal curve whose dual graph is a model for $\Gamma$, and use deformation theory to show that the nodal curve can be lifted to a proper, flat, semistable curve over $R$ with the nodal curve as its special fiber, which tropicalizes to $\Gamma$.
 
 We now describe a procedure for finding a nodal curve over $\mathbb{C}$ whose dual graph is a model for $\Gamma$. 
Let $G$ be a weighted stable graph of genus $g$ with $n$ infinite edges. Recall that a stable graph $G$ is a connected graph such that each vertex of weight zero has valence at least three. The dual graph of a stable curve is always a stable graph.

 The original idea for this procedure is due to Koll\'ar, cf. \cite{kol}, and works in a much more general setup. Suppose that the stable graph $\Gamma = (G , w, l)$ is such that:
\begin{enumerate}
\item For each vertex $v\in V(G)$, the weight $w(v)$ is of the form
\begin{equation}
w(v) = {d(v) - 1 \choose 2} 
\end{equation}
for some integer $d(v)$.
\item For each two vertices $v,w \in V(G)$, one has
\begin{equation}
| E(v,w) | \leq d(v)d(w) ,
\end{equation}

where $|E(v,w)|$ denotes the number of edges between $v$ and $w$. 
\end{enumerate}

Then every component of $G$ is realizable by a curve in $\PP^2$, and it is possible to achieve the right number of intersection points between every two components. More precisely, one can proceed as follows:
\begin{enumerate}
\item Label the vertices as $\{v_1,...,v_n\}=V(G)$. For each $i=1,...,n$, take a general smooth plane curve $C_i$ of degree $d_i = d(v_i)$.
\item We have now a reducible plane curve $C$, whose irreducible components are the curves $C_i$ of degree $d_i$ (and hence, by the genus degree formula, of genus $w(v_i)$). Any two components $C_i$ and $C_j$ will intersect in $d_id_j$ points, by B\'ezout's formula. We choose any $k_{ij} = d_id_j - |E(v_i,v_j)|$ of those, and set $r = \sum _{i,j} k_{ij}$.
\item Take the blow up of $\PP ^2$ at all the points chosen for each $i$ and $j$, which we will label by $p_1,...,p_r$:
\begin{equation}
X= \text{Bl} _{p_1, ... , p_r } \PP ^2,
\end{equation}
and consider the proper transform $\tilde{C}$ of $C$ in $X$.
\item Now, $X$ lives in the product $\PP ^2 \times \PP ^1$. Embed $X$ in $\PP ^{2+3-1} = \PP^4$ via a Segre embedding, and take the image of $\tilde{C}$. This will now be a projective curve with components of the correct genera (as the genus is a birational invariant), and any two components will intersect precisely at the correct number of points. Hence its dual graph will be $G$.

\end{enumerate}

\begin{example}
\label{ex12}
Consider the graph in Figure~\ref{Graph}. 
\begin{figure}[h]
\begin{center}
\includegraphics[height = 0.7 in]{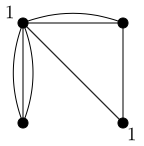}
\caption{The weighted graph in Example \ref{ex12}.}
\label{Graph}
\end{center}
\end{figure}

It has two components of genus zero and two components of genus one, which we can realize as a pair of lines (respectively, of cubics) in general position in $\PP^2$. The two lines will intersect in a point, the two cubics in nine points and each cubic will intersect each line in three points. The corresponding curve arrangement is as shown in Figure~\ref{Cubics}.
\begin{figure}[h]
\begin{center}
\includegraphics[height = 1.5 in]{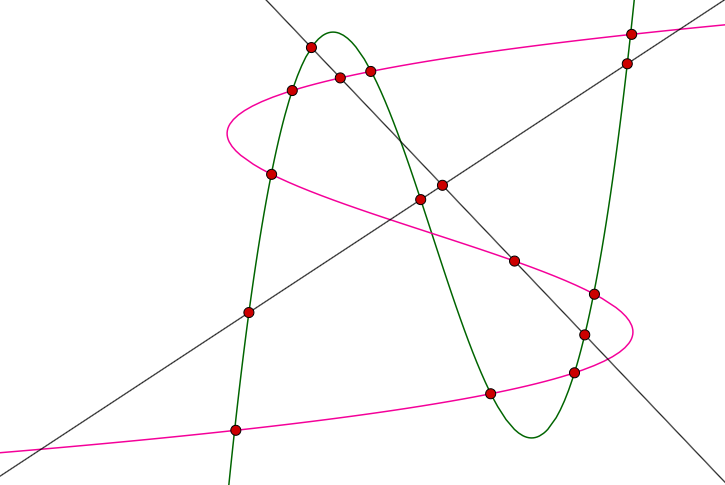}
\caption{Arrangement of curves in $\PP ^2$.}
\label{Cubics}
\end{center}
\end{figure}

We need to blow up eight of the nine intersection points between the two cubics, since they correspond to edges between the components of genus one in the graph. Moreover, the two components of genus zero do not share an edge, hence the unique intersection point between the two lines must be blown up, as well as the three intersection points of a chosen cubic with a line, two out of the three intersection point with the remaining line, and one of the three intersection points of the first cubic with the second line. In Figure~\ref{Cubics}, these point are marked in red. The result will be a curve in $\PP^2\times \PP^1\hookrightarrow \PP^4$, whose components will have the correct genera and will intersect at the correct number of points, and whose equations can be explicitly computed.

\end{example}

\begin{remark} The theory shows that it is, in principle, possible to find a smooth curve over $\Bbbk$ with a prescribed metric graph as its tropicalization. For certain types of graphs, more work has been done in this direction \cite{CFPU}. However, this problem is far from being solved in full generality in an algorithmic way, and this could be the subject of future research.
\end{remark}

\begin{acknowledgement}
This article was initiated during the Apprenticeship Weeks (22 August-2 September 2016), led by Bernd Sturmfels, as part of the Combinatorial Algebraic Geometry Semester at the Fields Institute. 
We heartily thank Bernd Sturmfels for leading the Apprenticeship Weeks
and for providing many valuable insights, ideas and comments. We would
also like to thank Melody Chan, for providing several useful
directions which were encoded in the paper and for suggesting
Example~\ref{melodyexample}. We are grateful to Renzo Cavalieri, for a
long, illuminating conversation about admissible covers, and Sam Payne
and Martin Ulirsch for suggesting references and for clarifying some
obscure points. We also thank Achill Sch\"urmann and Mathieu Dutour Sikiri\'c for their input on software for working with Delaunay subdivisions. 
The first author was supported by the Fields Institute for Research in Mathematical Sciences.
The second author was supported by the National Science Foundation Graduate Research Fellowship under Grant No. DGE 1106400 and the Max Planck Institute for Mathematics in the Sciences, Leipzig.
The third author was supported by a UC Berkeley University Fellowship and the Max Planck Institute for Mathematics in the Sciences, Leipzig.
\end{acknowledgement}

\Urlmuskip=0mu plus 1mu\relax
\bibliographystyle{amsalpha}
\bibliography{ref}

\end{document}